\def\mid{\vert}
\newcommand{\rrvert}{\vert}
\newcommand{\llvert}{\vert}
\newcommand{\eqref}[1]{(\ref{#1})}
\newcommand{\esp}{\mathbb{E}}
\newcommand{\var}{\operatorname{var}}
\newcommand{\cov}{\operatorname{cov}}
\newtheorem{theo}{Theorem}[section]
\newtheorem{lem}[theo]{Lemma}
\newtheorem{prop}[theo]{Proposition}
\newtheorem{cor}[theo]{Corollary}
\begin{document}
\begin{frontmatter}

\title{Estimating the scaling function of multifractal measures and
multifractal random walks using ratios}

\runtitle{Estimating the scaling function}

\begin{aug}
\author[1]{\fnms{Carenne} \snm{Lude\~{n}a}\thanksref{1}\ead[label=e1]{carinludena@gmail.com}}%
\and
\author[2]{\fnms{Philippe} \snm{Soulier}\corref{}\thanksref{2}\ead[label=e2]{philippe.soulier@u-paris10.fr}}
\runauthor{C. Lude\~{n}a and P. Soulier} 
\address[1]{Departamento de Matem\'aticas, Universidad central de Venezuela,
Ciudad Universitaria, Los \mbox{Chaguaramos}, Caracas, Venezuela. \printead{e1}}
\address[2]{Laboratoire MODAL'X, D{\'e}partement de Math{\'e}matiques, UFR
SEGMI, Universit{\'e} Paris Ouest-Nanterre, 200 avenue de la R\'epublique,
92001 Nanterre Cedex, France.\\ \printead{e2}}
\end{aug}

\received{\smonth{2} \syear{2011}}
\revised{\smonth{3} \syear{2012}}

\begin{abstract}
In this paper, we prove central limit theorems for bias reduced
estimators of
the structure function of several multifractal processes, namely mutiplicative
cascades, multifractal random measures, multifractal random walk and
multifractal fractional random walk as defined by Lude\~{n}a [\textit{Ann. Appl. Probab.}
\textbf{18} (2008) 1138--1163]. Previous
estimators of the structure functions considered in the literature were
severely biased with a logarithmic rate of convergence, whereas the estimators
considered here have a polynomial rate of convergence.
\end{abstract}

%
\begin{keyword}
\kwd{multifractal random measure}
\kwd{multifractal random walk}
\kwd{$p$-variations}
\kwd{scaling function}
\end{keyword}

\end{frontmatter}

\section{Introduction}

A random process $X=\{X(s), s\in[0,T]\}$ ($T>0$) with stationary
increments will
be called multifractal if its scaling behaviour is characterized by a strictly
concave function $\zeta$, called the scaling function, such that for a certain
range of real numbers $q$
\[
\esp \bigl[ \bigl|X(t) - X(s) \bigr|^q \bigr] = c(q) |t-s|^{\zeta(q)} .
\]
If the function $\zeta$ is linear, then the process is said to be
monofractal, as is the case, for instance, for the fractional Brownian
motion (FBM) $B_H$, $0<H<1$, which is defined as a continuous centered
Gaussian process such that $B_H(0)=0$ and for all $s,t \geq0$,
\[
\operatorname{var} \bigl(B_H(t)-B_H(s) \bigr) =
|t-s|^{2H} .
\]
Then, for all $q>-1$, $\esp[|B_H(t) - B_H(s)|^q] = c(q) |t-s|^{qH}$,
with $c(q) = \esp[|B_H(1)|^q]$.

Several truly multifractal processes with stationary increments have been
defined. The earliest one is the multiplicative cascade introduced by
Mandelbrot \cite{mandelbrot1974} and rigorously studied by
Kahane and
Peyri{\`e}re \cite{kahanepeyriere1976}. These processes were generalized by
Barral and
Mandelbrot \cite{barralmandelbrot2002}, Muzy and Bacry \cite{muzybacry2002} and
Bacry and Muzy \cite{bacrymuzy2003}. The latter authors introduced multifractal random
measures (MRM) and multifractal random walks (MRW) as time changed Brownian
motion. Lude{\~n}a \cite{ludena2008} and Abry \textit{et~al.} \cite{abrychainaiscoutinpipiras2009}
introduced multifractal (fractional) random walks which are conditionally
fractional Gaussian processes.

For these processes, multifractality results from a distributional scaling
property which can be written as
\[
\bigl\{ X(\lambda t) , 0 \leq t \leq T \bigr\} \stackrel{\mathrm{law}}= \bigl
\{U_\lambda X(t) , 0 \leq t \leq T \bigr\}
\]
for $0<\lambda<1$, $U_\lambda$ is a positive random variable
independent of the
process $X$ such that $\esp[U_\lambda^q] = \lambda^{\zeta(q)}$ for
$q<q_{\max}$
a certain parameter depending on the process under consideration (and with
certain additional restrictions on the values of $\lambda$ for which this
identiy holds in the case of multifractal cascades, see
Section~\ref{secmultcasc}). For the models, we will formally introduce
in the
sequel, it is defined as
\[
q_{\max} = \sup \bigl\{q\dvt \zeta(q) \geq1 \bigr\} .
\]
It is also important to note that the fixed time horizon $T$ beyond
which this scaling property need not be true is finite, except for
monofractal processes such as the FBM.

Given a multifractal process observed discretely on $[0,T]$, it is of
obvious interest to be able to identify the scaling function $\zeta$.

Let $t_1,\ldots, t_N$, with $t_i-t_{i-1}=\Delta=T/N$ be a regular
partition of
$[0,T]$ (typically on a dyadic scale). Typically, for $q<q_{\max}$,
$\zeta(q)$
is estimated by calculating logarithms of the empirical structure function
\[
S_N(X,q) := \sum_{j=0}^{N-1} |
\Delta X_{j}|^q ,
\]
where $\Delta X_{j} = X((j+1)\Delta)- X(j\Delta)$. Estimators of
$\zeta$ can then be defined by
\begin{eqnarray*}
\hat{\zeta}_N(q) & :=& 1 + \frac{\log_2(S_{N}(X,q))}{\log_2(\Delta)} ,
\\
\tilde{\zeta}_N(q) & :=& 1 + \log_2 \biggl(
\frac{S_{N}(X,q)}{
S_{2N}(X,q)} \biggr) .
\end{eqnarray*}
These estimators have been thoroughly dealt with for multiplicative
cascades in
Ossiander and
Waymire \cite{ossianderwaymire2000}. The authors show that $\hat{\zeta}_N(q)$ and
$\tilde{\zeta}_N(q)$ are consistent estimators of $\zeta(q)$ for
$q<q_0$, where
$q_0<q_{\max}$ is the largest value of $q$ such that
\[
\zeta(q) - q \zeta'(q) < 1 .
\]
For $q>q_0$, $\hat{\zeta}_N(q)$ is seen to converge almost surely to a linear
function of $q$. Moreover, conditional central limit theorems (where the
limiting distribution is a mixture of normal laws) are seen to hold for suitably
normalized versions of both estimators if $2q<q_0$. However, as shown
in~Ossiander and
Waymire \cite{ossianderwaymire2000}, the convergence rates for these
estimators are
very different. The rate of convergence of $\hat\zeta_N(q)$ is of order
$\log_2(N)$ because of the existence of a bias term, whereas we will
show that of $\tilde\zeta_N(q)$ is a power of $N$ which depends on~$\zeta$.

In order to enlarge the domain of consistency of the estimators and obtain
unconditional central limit theorems, the so-called mixed asymptotic framework
has been introduced by allowing the number $L$ of basic observations intervals
to increase with $N$. In the case of multiplicative cascades and MRM, the
processes over different intervals are independent. The observations are
$X((jL+k)\Delta)$, $0 \leq j \leq L-1$, $0 \leq k \leq N-1$, and the estimators
are now modified as follows
\begin{eqnarray*}
\hat{\zeta}_{L,N}(X,q) & :=& 1 + \frac{\log_2(S_{L,N}(X,q))}{\log_2(\Delta)} ,
\\
\tilde{\zeta}_{L,N}(X,q) & :=& 1 + \log_2 \biggl(
\frac{S_{L,N}(X,q)}{
S_{L,2N}(X,q)} \biggr) ,
\end{eqnarray*}
with
\[
S_{N,L}(X,q) := \sum_{j=0}^{L-1}
\sum_{k=0}^{N-1} |\Delta X_{jL+k}|^q
.
\]
The mixed asymptotic framework for multiplicative cascades has been recently
developed in Bacry \textit{et~al.} \cite{bacrygloterhoffmannmuzy2010}. The authors show that if
$L=[N^{ \chi}]$, where $[x]$ stands for the greatest integer $m\le x$ with
$\chi>0$, then $\hat{\zeta}_{N,L}(X,q)$ is consistent for $q<q_\chi$ where
$q_\chi$ is the largest value of $q$ such that
\[
\zeta(q) - q\zeta'(q) < \chi+1 .
\]
Note that as $\chi$ tends to infinity, $q_\chi$ might become greater than
$q_{\max}$, so we will only consider values of $\chi$ such that
$q_\chi<q_{\max}$.

However, once again, there exists a bias term $b_N:=\mathbb
{E}[M_1^q]/\log_2(N)$, which
entails slow convergence of the estimator.
In analogy to the nonmixed asymptotic framework it is reasonable to consider
ratio based estimators such as $\tilde{\zeta}_{N,L}(X,q)$ in order to improve
convergence rates. It turns out, as follows quite straightforwardly
from the
results of Bacry \textit{et~al.} \cite{bacrygloterhoffmannmuzy2010}, that
$\tilde{\zeta}_{N,L}(X,q) \to\zeta(q)$, a.s., for a dyadic partition,
but the
authors failed to prove a central limit theorem, although they hint at
it at the
end of their Section~3. Almost sure convergence for dyadic partitions,
or in
probability for general partitions, of $\hat{\zeta}_{N,L}(X,q)$ has
also been
recently considered by Duvernet \cite{duvernet2009} for $\chi\ge0$ and $X$ a Brownian
MRW or a MRM. However, the author does not prove central limit theorems nor
establish convergence rates in either case. An interesting application for
testing whether a process is a semimartigale or a multifractal process is
developed in Duvernet, Robert and
Rosenbaum \cite{duvernetrobertrosenbaum2010} which is based on the
limiting behaviour of variation ratios, but the authors restrict their attention
to log-normal multifractal random walks and $q=2$.

The main goal of this paper is to obtain central limit theorems for the
estimator $\tilde\zeta_{N,L}$ in the mixed asymptotic setting, for
multiplicative cascades, multifractal random measures (MRM) and multifractal
random walks (MRW) that are either a time changed Brownian motion or a more
general process related to a fractional Brownian motion with Hurst index
$H>1/2$. Our main results in all these cases state unconditional
central limit
theorems with polynomial rates of convergence, contrary to $\hat\zeta_{L,N}$
which can only achieve logarithmic rates of convergence, and to the
case $L=1$
where only conditional central limit theorems can be obtained.

For multiplicative cascades, Ossiander and
Waymire \cite{ossianderwaymire2000} also considered
negative values of~$q$ such that $\esp[M^q([0,1])]<\infty$ and $0>q>\inf_{h\le
0}\{h \psi'(h)-\psi(h)<1\}$. However, we cannot extend such a result
in full
generality in the present context, since for certain MRM which are considered
here, $\esp[M^q([0,1])]=\infty$ for all $q<0$. Moreover, negative
moments of the
Gaussian law are infinite for $q\leq-1$, thus even if the MRM
considered has
finite negative moments, that might not be the case for the MRW. For these
reasons, and not to increase the length of the paper, we do not
consider the
case $q<0$.

The rest of the paper is organized as follows. We will consider multiplicative
cascades in Section~\ref{secmultcasc}, MRM in Section~\ref{secmrm}, and
MRW in
Section~\ref{secmrw}. Section~\ref{seclemmata} contains the main ideas
of the
proofs and technical lemmas are relegated to the \hyperref[app]{Appendix}. To the best
of our
knowledge, our results are the first to deal with the MRW in the case $H>1/2$.

\section{Multiplicative cascades}
\label{secmultcasc}
In this section, we give a precise formulation of consistency results for
$\tilde{\zeta}(q)$, whenever $q<{q}_\chi$, and a central limit theorem whenever
$2q<{q}_\chi$, in the case of multiplicative cascades. The results are a
straightforward application of previous results of
Bacry \textit{et~al.} \cite{bacrygloterhoffmannmuzy2010} and
Ossiander and
Waymire \cite{ossianderwaymire2000}. However, they provide the framework for dealing
with both MRM and MRW so will be dealt with in some detail. Before we
state the
main results, we shall introduce the mixed asymptotic setting, following
Bacry \textit{et~al.} \cite{bacrygloterhoffmannmuzy2010}.

For any given $n$-tuple $r$ and $i<n$ set $r|i=(r_1,\ldots, r_i)$ and
if $s$ is
an $i$-tuple and $v$ an $(n-i)$-tuple set $r=s*v$ to be the resulting $n$-tuple
obtained by concatenation. For each $j\in\mathbb{Z}$ and fixed $T$, set
$I^{(j)}:=[jT,(j+1)T]$. Over each $I^{(j)}$ we will construct an independent
{\emph multiplicative cascade} as defined in Mandelbrot \cite{mandelbrot1974}. For this,
consider a collection $\{W_r^{(j)}, r\in\{0,1\}^n , n\ge1, j \in
\mathbb
Z\}$ of independent random variables with common law $W$ such that $\esp[W]=1$
and $\esp[W\log_2W]<1$ and for each $n\geq1$ and $j\in\mathbb Z$,
consider the
random measure defined by
\[
\lambda_n^{(j)}(I) = T2^{-n} \sum
_{\{r\in\{0,1\}^n : (j-1+r)T \in
I^{(j)}\}} \prod_{i=1}^n
W_{r|i}^{(j)}
\]
for any Borel subset $I$ of $I^{(j)}$, and each $r=(r_1,\ldots,r_n) \in
\{0,1\}^n$ is associated to the real number $\sum_{i=1}^n r_i2^{n-k}$.
It can be seen (see Kahane and
Peyri{\`e}re \cite{kahanepeyriere1976},
Ossiander and
Waymire \cite{ossianderwaymire2000} for details on the construction and main
results) that there exists a random measure $\lambda_\infty^{(j)}$,
such that
\[
\mathbb P \bigl(\lambda_n^{(j)} \Rightarrow
\lambda_\infty^{(j)} \mbox{ as } n \to\infty \bigr) = 1 ,
\]
where $\Rightarrow$ stands for vague convergence. The limiting measure verifies
$\esp[\lambda_\infty^{(j)}([0,T])] = T$. By construction $\lambda_\infty^{(j)}$
are independent random measures, defined over the disjoint intervals
$I^{(j)}$. Set $\lambda_\infty:= \sum_{j\in\mathbb Z} \lambda_\infty^{(j)}$.

Set $\mathcal{F}_n = \sigma\{W_{r}^{(j)}, r\in\{0,1\}^n, j\in\mathbb
Z\}$ and let $\Delta_{k,n}^{(j)}:=[(j +k2^{-n})T,(j+(k+1)2^{-n})T]$,
$k=0,\ldots, 2^{n}-1$, be the $k$th diadic interval at level $n$, of
the interval $I^{(j)}$. Then,
\[
\lambda_\infty\bigl(\Delta_{k,n}^{(j)}\bigr) =
2^{-n} Z_{j,k,n} \prod_{i=1}^n
W_{r_n(k) \mid i}^{(j)} ,
\]
where for each $n$, $Z_{j,k,n}$, $0 \leq k <2^n$, $j\in\mathbb Z$, are
i.i.d. random variables with the same distribution as
$\lambda_\infty([0,T])$ and independent of $\mathcal F_n$, and
$r_n(k)$ is the dyadic representation of $k$, that is, $k=\sum_{i=1}^n
r_{n,i}(k) 2^{n-i}$ for $k<2^n$. Moreover, $Z_{j,2k,n+1}$ and
$Z_{j,2k+1,n+1}$ are independent of $Z_{j,k',n}$ for $k' \ne k$.
The above identity straightforwardly yields the
scaling property:
\[
\esp \bigl[ \lambda_\infty^q\bigl(\Delta_{k,n}^{(j)}
\bigr) \bigr] = 2^{-n\zeta(q)} \esp \bigl[\lambda_\infty^q
\bigl([0,T] \bigr) \bigr] ,
\]
with
\[
\zeta(q) = q - \log_2 \bigl(\esp\bigl[W^q\bigr] \bigr) .
\]
It is shown in Kahane and
Peyri{\`e}re \cite{kahanepeyriere1976} that for $q>1$, the condition
$\zeta(q)>1$ implies $\esp[\lambda_\infty^q([0,T])] < \infty$.

\begin{example}
Consider the log-normal cascade, where $\log W = \mu+\sigma Z$ and $Z$
is a
standard Gaussian random variable. The condition $\esp[W]=1$ implies
that $\mu
= -\sigma^2/2$. Then it is easily obtained that
\begin{eqnarray*}
\zeta(q) &=& q - \frac{q(q-1)\sigma^2}{2\log2} ,\qquad q_{\max} = \biggl(\frac{2\log2}{\sigma^2}
\biggr)\vee1 , \qquad q_{0} = \frac{\sqrt{2\log
2}}{\sigma} ,\\
 q_\chi&=&
\frac{\sqrt{2(1+\chi)\log2}}\sigma .
\end{eqnarray*}

\end{example}

Denote
\[
S_{L,n}(q) = \sum_{j=0}^{L-1}
\sum_{k=0}^{2^n-1} \lambda_\infty^q
\bigl(\Delta_{k,n}^{(j)}\bigr)
\]
and
\begin{eqnarray*}
\hat{\zeta}(q) := 1 - \frac{\log_2(S_{L,n}(q))}{n} , \qquad\tilde{\zeta}(q) = 1 +
\log_2 \biggl(\frac{S_{L,n}(q)}{S_{L,n+1}(q)} \biggr) .
\end{eqnarray*}
Note that although in the asymptotics $L$ will eventually depend on
$n$, its
value is the same in the quantities $S_{L,n}$ and $S_{L+1,n}$.

\textit{Consistency}.
For each $n\geq1$, let $\{\xi, \xi_{j,k,n}, 0\leq j \leq L-1, 0 \leq k
\leq
2^n-1\}$ be a collection of i.i.d. random variables, independent of
$\mathcal
F_n$. Define
\[
\tilde S_{n,q} = 2^{-nq} \sum_{j=0}^{L-1}
\sum_{k=0}^{2^n-1} \prod
_{i=1}^n \bigl( W_{r_n(k)\mid i}^{(j)}
\bigr)^{q} \xi_{j,k,n} .
\]
In Bacry \textit{et~al.} \cite{bacrygloterhoffmannmuzy2010}, the following general result is
shown to hold.
%
\begin{prop}
\label{propconsistency-cascade}
For $\chi>0$, assume that $L=[2^{n\chi}]$, $q < {q}_\chi$ and there exists
$\varepsilon>0$ such that $\esp[\xi^{1+\varepsilon}]<\infty$. If $\xi$ is
nonnegative, then
\[
L^{-1} 2^{-n} 2^{n\zeta(q)} \bigl(\tilde
S_{n,q}-\esp[\tilde S_{n,q}]\bigr) \to0\qquad \mbox{a.s.}
\]
\end{prop}

Note that by construction $\esp[\tilde S_{n,q}] = L 2^n 2^{-n\zeta(q)}
\esp[\xi]$, so that the above result yields the almost sure convergence $L^{-1}
2^{-n} 2^{n\zeta(q)} \tilde S_{n,q} \to\esp[\xi]$ under the stated conditions.
As a consequence, by the definition of $S_{L,n}(q)$,
Proposition~\ref{propconsistency-cascade} yields
%
\begin{equation}
\label{eqstructfun} L^{-1} 2^{-n} 2^{n\zeta(q)}
S_{L,n}(q) \to\esp\bigl[\lambda_\infty^q
\bigl([0,T]\bigr)\bigr] \qquad\mbox{a.s. }
\end{equation}
for $q<{q}_\chi$. Then, clearly,
\[
\hat{\zeta}(q) - \zeta(q) + \chi+ \frac{\log_2\esp[\lambda_\infty^q([0,T])]}{n} \to0 \qquad\mbox{a.s.} ,
\]
and (\ref{eqstructfun}) also implies that $\tilde{\zeta}(q) \to\zeta(q)$
a.s.
On the other hand, if $q > q_\chi$, then Bacry \textit{et~al.}~\cite{bacrygloterhoffmannmuzy2010}
show that $\hat{\zeta}(q)\to\zeta'(q_\chi)q,$ which is a linear
function of
$q$. In this case, $\tilde{\zeta}(q)$ is also not consistent as the normalized
structure function tends to zero~(Ossiander and
Waymire \cite{ossianderwaymire2000}).

\textit{Central limit theorem}.
Based on Proposition~\ref{propconsistency-cascade}, it is also possible to
obtain a central limit theorem for $\tilde{\zeta}(q)$. We remark that
in the
mixed asymptotic framework the limiting variance is deterministic. The
proof of
the central limit theorem follows from a series of corollaries of the following
general result for the mixed framework which is a direct generalization of
Proposition~4.1 in Ossiander and
Waymire \cite{ossianderwaymire2000} and
Proposition~\ref{propconsistency-cascade}. We first state some general
notation. Let $\{\xi,\xi_{j,k,n},0\leq j \leq L-1,0\leq k \leq2^{n-1},
n\geq0\}$ be as above and define
\begin{eqnarray*}
V_{n,q} = 2^{-2nq} \sum_{j=0}^{L-1}
\sum_{k=0}^{2^n-1} \prod
_{i=1}^n\bigl(W_{r_n(k) \mid i}^{(j)}
\bigr)^{2q} ,\qquad  R_{n,q} = \tilde S_{n,q}/V_{n,q}^{1/2}
.
\end{eqnarray*}
The following proposition is seen to hold true as a direct
generalization of Proposition 4.1 in Ossiander and
Waymire \cite{ossianderwaymire2000}, whenever
$2q<{q}_\chi$.
%
\begin{prop}
\label{propOW-41}
If $2q<{q}_\chi$, $\esp[\xi_{j,k,n}] = 0$, $\esp[\xi_{j,k,n]}^2] =
\sigma^2$ and if
\[
\sup_n \sup_{j,k} \esp \bigl[ |\xi_{j,k,n}|^{2(1+\delta)}
\bigr] < \infty
\]
for some $\delta>0$, then
\[
\lim_{n\to\infty} \esp \bigl[ \mathrm{e}^{\mathrm{i}zR_{n,q}} \mid
\mathcal{F}_n \bigr] = \mathrm{e}^{-\sigma^2z^2/2}
\]
and $R_{n,q}$ converges weakly to the centered Gaussian law with
variance $\sigma^2$.
\end{prop}
The proof follows exactly as that of Proposition 4.1 in
Ossiander and
Waymire \cite{ossianderwaymire2000}, using
Proposition~\ref{propconsistency-cascade}. The latter also yields that $L^{-1}
2^{-n} 2^{n\zeta(2q)} V_{n,q}$ converges to 1 a.s. We can now state a central
limit theorem for the empirical structure function.
%
\begin{prop}
\label{propclt-cascade}
If $2q<{q}_\chi$, then
\[
L^{-1/2} 2^{-n/2} 2^{n\zeta(2q)/2} \bigl\{ S_{L,n}(q)
-2^{\zeta(q)-1} S_{L,n+1}(q) \bigr\} \to_d \mathbf{N}
\bigl(0,V(q)\bigr) ,
\]
with
\[
V(q) = \var \bigl( Z_0^q - 2^{\zeta(q)-1-q} \bigl
\{Z_1^q W_1^q +
Z_2^q W_2^q \bigr\} \bigr)
\]
and $Z_1$, $Z_2$ are i.i.d. with the same
distribution as $\lambda_\infty([0,1])$ and independent of $W_1$,
$W_2$, which
are i.i.d. with the same distribution as $W$, and $Z_0 =
(Z_1W_1+Z_2W_2)/2$ has
the same distribution as $\lambda_\infty([0,1])$.
\end{prop}
\begin{pf}
The proof follows from Proposition \ref{propOW-41}, by noting that $
S_{L,n}(q) - 2^{\zeta(q)-1} S_{L,n+1}(q)$ can be expressed as
\begin{eqnarray*}
S_{L,n}(q) - 2^{\zeta(q)-1} S_{L,n+1}(q) = 2^{-nq}
\sum_{j=0}^{L-1} \sum
_{k=0}^{2^n-1} \prod_{i=1}^n
\bigl(W_{r_n(k)\mid i}^{(j)}\bigr)^{q} \xi_{j,k,n}
\end{eqnarray*}
with
\begin{eqnarray*}
\xi_{j,k,n} = Z_{j,k,n}^q - 2^{\zeta(q)-1-q} \bigl\{
Z_{j,2k,n+1}^q W_{r_n(k)*0}^q +
Z_{j,2k,n+1}^q W_{r_n(k)*1}^q \bigr\} ,
\end{eqnarray*}
since $r_n(k)*0 = r_{n+1}(2k)$ and $r_n(k)*1 = r_{n+1}(2k+1)$.
Indeed, the random variables $\xi_{j,k,n}$, $j\in\mathbb Z$, $0\leq k
<2^n$, are
i.i.d. (for each fixed $n$) and it clearly holds that $\esp[\xi_{j,k,n}] =0$,
$\esp[\xi_{j,k,n}^{2}] = V(q)$ and $\esp[|\xi_{j,k,n}|^{2+\delta
}]<\infty$,
whenever $2q<q_{max}$ for small enough $\delta>0$.
\end{pf}
Thus we obtain a central limit theorem for $\tilde\zeta(q)$.
%
\begin{theo}
\label{theocor46}
Assume $2q<{q}_\chi$. Then
\[
2^{n(1 + \chi+ 2\psi(q) - \psi(2q))/2} \bigl\{ \tilde{\zeta}(q) - \zeta(q) \bigr\} \to_d
\mathbf{N}\bigl(0,V(q)/\bigl(\esp\bigl[\lambda_\infty^q
\bigl([0,T]\bigr)\bigr]\bigr)^2\bigr).
\]
\end{theo}
\begin{pf}
By Proposition~\ref{propconsistency-cascade} and~(\ref{eqstructfun}),
$S_{L,n+1}(q)2^{ \zeta(q)-1} /S_{n,L}(q) \to1$ a.s. so
\begin{eqnarray*}
\tilde{\zeta}(q) - \zeta(q) & =& \log_2 \biggl( \frac{S_{L,n}(q)} {
2^{\zeta(q)-1} S_{L,n+1}(q)}
\biggr) = - \log_2 \biggl(1 - \frac{S_{L,n}(q)-2^{\zeta(q)-1}S_{L,n+1}(q)} {S_{L,n}(q)} \biggr)
\\
& =& \frac{S_{L,n}(q)-2^{\zeta(q)-1}S_{L,n+1}(q)} {S_{L,n}(q)} \times \bigl\{1+\mathrm{o}_P(1) \bigr\} .
\end{eqnarray*}
The proof is concluded by applying Proposition~\ref{propclt-cascade} and
noting that $2^{-n\chi}L\to1$.
\end{pf}

\section{Multifractal random measures}
\label{secmrm}
Once again we are interested in the mixed asymptotic framework defined
by the
parameter~$\chi$. The main ideas dealt with in this section are very
similar in
spirit to those in Duvernet \cite{duvernet2009}. We include the proofs for completeness'
sake, since they are very similar to those which will be developed to study
multifractal random walks. We recall the main definition and properties of
multifractal random measures, hereafter MRM, following Bacry and Muzy \cite{bacrymuzy2003}.
Start by defining for $l>0$, $w_l(u)=P(A_l(u))$ and set
\[
M(I) = \lim_{l\to0} \int_I \mathrm{e}^{w_l(u)}
\,\mathrm{d} u ,
\]
where $I$ is any Borel set in $\mathbb{R}$. Here $P$ is an independently
scattered random measure on $\mathcal{S}^+=\{(s,t), t>0\}$ such that
$P(\bigcup_{i=1}^\infty A_i) = \sum_{i=1}^\infty P(A_i)$ if the Borel measurable
sets $A_i$ are pairwise disjoint and then the random variables $P(A_i)$,
$i\geq1$, are independent, and
%
\begin{equation}
\label{eqlaplace} \esp\bigl[\mathrm{e}^{qP(A)}\bigr] = \mathrm{e}^{\psi(q)\mu(A)} ,
\end{equation}
with $\mu(A)=\int_A t^{-2} \,\mathrm{d} s \,\mathrm{d} t$ and
\[
A_l(u) = \bigl\{(s,t), u-(t/2 \wedge T/2) < s < u + (t/2 \wedge
T/2), t>l\bigr\} .
\]

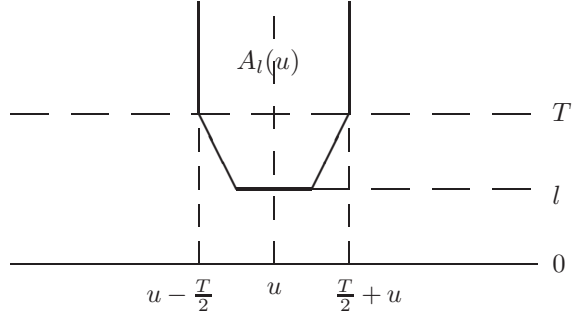
\begin{figure}[t]\vspace*{10pt}
\centering
\setlength{\unitlength}{1mm}
\begin{picture}(100,40)(-50,-28)

\multiput(0,-10)(8,0){4}{\line(1,0){5}}
\put(-40,-20){\line(1,0){70}}
\multiput(-40,0)(8,0){9}{\line(1,0){5}}

\multiput(-15,-20)(0,5){5}{\line(0,1){3}}
\multiput(5,-20)(0,5){5}{\line(0,1){3}}
\multiput(-5,-20)(0,6){6}{\line(0,1){3}}

\put(32,-12){$l$}
\put(32,-1){$T$}
\put(32,-21){$0$}
\put(-22,-25){$u-\frac T2$}
\put(3,-25){$\frac T2+u$}
\put(-10,6){$A_l(u)$}
\put(-6,-24){$u$}

\thicklines
\put(-10,-10){{\line(1,0){10}}}
\put(-15,0){{\line(0,1){15}}}
\put(-10,-10){{\line(-1,2){5}}}
\put(5,0){{\line(0,1){15}}}
\put(0,-10){{\line(1,2){5}}}

\end{picture}
\caption{The set $A_l(u)$.}
\label{figAlu}
\end{figure}
It is readily checked that $\mu(A_l(t)) = T + \log(T/l)$, which implies,
with~(\ref{eqlaplace}), that
%
\begin{equation}
\label{eqlaplace-wl} \esp\bigl[ \mathrm{e}^{qw_l(t)}\bigr] =
\mathrm{e}^{(T+\log T)\psi(q)} l^{-\psi
(q)} .
\end{equation}
The function $\psi$ is the log-Laplace transform of the infinitely divisible
random measure $P$, assumed to exist for $q<q^*$, for some $q^*>1$. It
is convex
and satisfies $\psi(0)=\psi(1)=0$. By the L\'evy--Khinchine
representation theorem,
it can be expressed as
\[
\psi(q) = \frac{\sigma^2}2 + mq + \int_{-\infty}^\infty
\bigl\{\mathrm e^{qx} - 1 - x \mathbf1_{\{|x|\leq1\}} \bigr\} \nu(
\mathrm{d} x) ,
\]
where $\nu$ is the L\'evy measure of $P$ and satisfies
\[
\int_{-\infty}^\infty\bigl(x^2\wedge1\bigr)
\nu(\mathrm{d} x) < \infty .
\]
The assumption that $\psi(q)$ is finite for $q<q^*$ entails the
following condition. For all $q<q^*$,
\[
\int_{1}^\infty\mathrm{e}^{qx} \nu(
\mathrm{d} x) < \infty .
\]
By Bacry and Muzy \cite{bacrymuzy2003}, Theorem~4, there exists a certain infinitely
divisible random variable~$\Omega_\lambda$, which is independent of $M([0,T])$,
such that $\esp[\mathrm{e}^{q\Omega_\lambda}] = \lambda^{-\psi(q)}$ and for
$\lambda,l\in(0,1)$,
%
\begin{equation}
\label{eqscaling} \bigl\{w_{\lambda l}(\lambda u) , 0 \leq u \leq T\bigr\}
\stackrel{\mathrm{law}}= \bigl\{w_{l}(u) + \Omega_\lambda , 0 \leq u
\leq T\bigr\} .
\end{equation}
The latter is known as the scaling property. This implies that
%
\begin{equation}
\label{eqcdv-mrm} M\bigl([0,\lambda T]\bigr) \stackrel{d} {=} \lambda
\mathrm{e}^{\Omega_\lambda}M\bigl([0,T]\bigr)
\end{equation}
for $\lambda\in[0,1]$, so that
%
\begin{equation}
\label{eqscaling-mrm} \esp\bigl[M^q\bigl([0,\lambda T]\bigr)\bigr] =
\lambda^{\zeta(q)} m(q)
\end{equation}
with $\zeta(q) = q-\psi(q)$ and $m(q)=\esp[M^q([0,T])]$. It is shown in
Bacry and Muzy \cite{bacrymuzy2003}, Theorem~3, that if $\zeta(q)>1$, then
$\mathbb{E}[M^q([0,T])]<\infty$. As previously, set $q_{\max}$ to be the
greatest value of $q$ such that $\zeta(q) \ge1$ and for $\chi\geq0$, define
$q_\chi$ as
\[
q_\chi= \max\bigl\{q\dvt q\psi'(q)<\psi(q)+1+\chi\bigr
\} .
\]
Assume moreover that $\chi$ is such that $q_\chi<q_{\max}$. Then, for
all $p$
such that $pq<q_\chi$, it holds that
%
\begin{equation}
\label{eqpsi-pq} 0 < \psi(pq) - p \psi(q) < (p-1) (1+\chi) .
\end{equation}
See Section~\ref{seclemmata} for a proof.
%
\begin{example}
Consider the Poisson cascade introduced by Barral and
Mandelbrot \cite
{barralmandelbrot2002}. Let
$N$ be a Poisson point process with intensity measure $\mu$ on
$(-\infty,\infty)\times(0,\infty]$. Let $\Gamma_i$, $i\in\mathbb Z$
denote the
points of $N$ and let $\{W,W_i\}$ be a collection of i.i.d. positive random
variables such that $\esp[W]=1$. Define the random measure $P$ by
\[
P(A) = \sum\log(W_i) \mathbf1_{\{\Gamma_i\in A\}}
\]
for all relatively compact Borel sets $A \in(-\infty,\infty) \times
(0,\infty]$. Then~(\ref{eqlaplace}) holds with $\psi(q) = \esp[W^q]-1$ and
\begin{eqnarray*}
q_{\max} = \max\bigl\{q\dvt \esp\bigl[W^q\bigr] \leq q\bigr
\} ,\qquad q_{\chi} = \max\bigl\{q\dvt q\esp\bigl[W^q\bigl(
\log(W)-1\bigr)\bigr] \leq1+\chi\bigr\} .
\end{eqnarray*}
\end{example}

\begin{example}
The random measure $P$ can be a Gaussian random measure. Then $P(A)
\sim
\mathbf{N}(-\sigma^2\mu(A)/2,\sigma^2\mu(A))$ and $\psi(q) = \sigma^2
q(q-1)/2$ so that we get the same values of $q_{\max}$, $q_0$ and
$q_\chi$ as
for the multiplicative cascade of the previous section, up to the
$\log2$
term. Note that in this case, $\operatorname{var}(P(A)) = \psi^{\prime\prime}(0)
\mu(A)$ is finite if and only if $\mu(A)<\infty$.
\end{example}

\begin{example}
Let $\alpha\in(0,1)$ and $P$ be a totally skewed to the left $\alpha$-stable
random measure, that is, $\psi(q) = \sigma^\alpha(q-q^\alpha)$. Then
$q_{\max}>1$
if and only if $\sigma^\alpha(1-\alpha)<1$ and then $q_{\max}=\infty$
and for
$\chi\geq0$, $q_\chi= \sigma^{-1} ((1+\chi)/(1-\alpha))^{1/\alpha}$.
It is
noteworthy that, contrary to the previous case, we have here that
$\esp[|P(A)|] = \infty$ and $\esp[\mathrm{e}^{qP(A)}] = \infty$ for
all $A$
such that $\mu(A)>0$ and for all $q<0$, though $\esp[|P(A)|^p] =
c_{p,\alpha}
\sigma^p \mu(A)^{p/\alpha}$ if $p<\alpha$ and $\mu(A)<\infty$.
\end{example}

\begin{example}
Let $\alpha\in(1,2)$ and $P$ be a totally skewed to the left
$\alpha$-stable random measure, that is, $\psi(q) =
\sigma^\alpha(q^\alpha-q)$. Then $q_{\max}>1$ if and only if
$\sigma^\alpha(\alpha-1)<1$ and then $q_{\max}<\infty$. For
$\chi\geq0$, $q_\chi= \sigma^{-1} ((1+\chi)/(\alpha-1))^{1/\alpha}$.
\end{example}

Define, as in the previous section, $L=[2^{n\chi}]$, $\Delta_{k,n}^{(j)} =
[(j+k2^{-n})T,(j+(k+1)2^{-n})T]$ and
\begin{eqnarray*}
S_{L,n}(M,q) &=& \sum_{j=0}^{L-1}
\sum_{k=0}^{2^n-1} M^q\bigl(
\Delta_{k,n}^{(j)}\bigr) ,
\\
\tilde{\zeta}_M(q) &=& 1 + \log_{2} \biggl(
\frac{S_{L,n}(M,q)}{S_{L,n+1}(M,q)} \biggr) .
\end{eqnarray*}

\textit{Consistency}.
For convenience, denote $\tau(q) = \zeta(q)-1$. We have the following result,
whose proof is in Section~\ref{seclemmata}.

\begin{prop} \label{propconv-as-SnMq}
For $q<{q}_\chi$,
\[
L^{-1} 2^{n\tau(q)} S_{L,n}(M,q) \to m(q) \qquad\mbox{a.s. }
\]
\end{prop}
Plugging this into the definition of $\tilde\zeta_M(q)$ yields the
consistency of $\tilde\zeta_M(q)$.
%
\begin{cor}
\label{coroconv-as-tildetau}
For $q<{q}_\chi$,
\[
\tilde{\zeta}_M(q) \to\zeta(q)\qquad \mbox{a.s. }
\]
\end{cor}

\textit{Central limit theorem}.
We next give a central limit theorem for $\tilde{\zeta}_M(q)$ in the mixed
asymptotic framework. Define the centered random variables
%
\begin{eqnarray}
\label{eqdef-Djknq} D_{j,k,n,q} := M^{q}\bigl(
\Delta_{k,n}^{(j)}\bigr) -2^{\tau(q)}
\bigl(M^{q}\bigl(\Delta_{2k,n+1}^{(j)}\bigr) +
M^{q}\bigl(\Delta_{2k+1,n+1}^{(j)}\bigr)\bigr)
\end{eqnarray}
and $D_{j,n,q} = \sum_{k=0}^{2^{n}-1} D_{j,k,n,q}$. By construction, the
variables $D_{j,k,n,q}$ are centered, and stationary and 2-dependent with
respect to $j$. We will start by proving a central limit theorem for $
(L\esp[D_{0,n,q}^{2}])^{-1/2} \sum_{j=0}^{L-1} D_{j,n,q}$. Since the random
variables $D_{j,n,q}$, $0\leq j \leq L-1$, are 2-dependent, it suffices
to show
that for some $p>1$,
%
\begin{equation}
\label{eqdelta1} \lim_{n\to\infty} \frac{ L^{1-p} \esp[ D_{0,n,q}^{2p}] } {
(\esp[D_{0,n,q}^{2}])^p} = 0 .
\end{equation}
We will need the order of magnitude of $D_{0,n,q}$. Set
\[
d_{q} = \esp \bigl[  M^{q}\bigl([0,T] - 2^{\tau(q)}
\bigl\{M^{q}\bigl([0,T/2]\bigr)+M^{q}\bigl([T/2,T]\bigr)
\bigr\} \bigr)^{2} \bigr]
\]
and $d_{k,q} = 2^{n\zeta(2q)} \esp[D_{0,0,n,q}D_{0,k,n,q}]$. By the scaling
property, $\esp[D_{0,0,n,q}^{2}] = 2^{-n\zeta(2q)} d_{q}$ and $d_{k,q}$
does not
depend on $n$. Then,
\[
\esp\bigl[D_{0,n,q}^{2}\bigr] = 2^{-n\tau(2q)}
d_{q} + 2 \cdot2^{-n\tau(2q)} \sum_{k=1}^{2^{n}-1}
\bigl(1-k2^{-n}\bigr) d_{k,q} .
\]
By Lemma~\ref{lemappcov-differences}, we have $d_{k,q} =
\mathrm{O}(k^{-\{\psi(2q)-2\psi(q)+1\}})$. Since $\psi(2q)-2\psi(q)>0$, this implies
that the series $\sum|d_{k,q}|$ is convergent, so the Cesaro\vspace*{1pt} mean
above has a
finite limit and thus $\lim_{n\to\infty} 2^{n\tau(2q)} \esp
[D_{0,n,q}^2] = d_{q}
+ 2 \sum_{k=1}^\infty d_{k,q}$. By
Lemma~\ref{lemappmoment-2p-Dnq} we have $ \esp[D_{0,n,q}^{4}] =
\mathrm{O}(n2^{-n\tau(4q)}+2^{-2n\tau(2q)})$. If $4q<q_\chi$, then
$\psi(4q)-2\psi(2q)<1+\chi$, thus~(\ref{eqdelta1}) holds for $p=2$. The above
discussion leads to the following result.
%
\begin{prop}
\label{propclt-differences-mrm}
If $4q< q_{\chi}$, then there exists a constant $\Theta_q$ such that
\[
L^{-1/2} 2^{-n\tau(2q)/2} \sum_{j=0}^{L-1}
D_{j,n,q} \to_d \mathbf{N}(0,\Theta_q) .
\]
\end{prop}
We can now prove the asymptotic normality of $\tilde\zeta_M(q)$. Denote
\begin{eqnarray*}
R_n & =& \frac{S_{L,n}(M,q) - 2^{\tau(q)} S_{L,n+1}(M,q)} {
S_{L,n}(M,q)}
\\
& =& 2^{n\{2\psi(q)-\psi(2q)-2\psi(q)-1-\chi\}/2} \frac
{L^{-1/2}2^{-n\tau(2q)/2} \sum_{j=0}^{L-1} D_{j,n,q}} {
L^{-1}2^{-n\tau(q)}S_{L,n}(M,q)} .
\end{eqnarray*}
By~(\ref{eqpsi-pq}) applied with $p=2$ and $2q<q_\chi$, it holds that
$1+\chi+2\psi(q)-\psi(2)>0$. Thus, by Propositions~\ref{propconv-as-SnMq}
and~\ref{propclt-differences-mrm}, we have that $R_n=\mathrm{o}(1)$ a.s., so a second
order Taylor expansion yields
\begin{eqnarray*}
\tilde{\zeta}_M(q) - \zeta(q) = \log_2 \biggl(
\frac
{S_{L,n}(M,q)}{2^{\tau(q)} S_{L,n+1}(M,q)} \biggr) = -\log(1-R_n) = R_n +
\mathrm{O}_P\bigl(R_n^2\bigr) .
\end{eqnarray*}
Applying Propositions~\ref{propconv-as-SnMq} and~\ref{propclt-differences-mrm}
yields the next result.
%
\begin{theo}
\label{theocormrm}
If $4q<{q}_\chi$, then
\[
2^{n(1+\chi-\psi(2q)+2\psi(q))/2}\bigl(\tilde{\zeta}_M(q)-\zeta(q)\bigr) \to
\mathbf{N} \bigl( 0, m^{-1}(q)\Theta_q\bigr) .
\]
\end{theo}
For $q,q'<4q_\chi$, it can be shown that $2^{n(1+\chi)} (
2^{n\{2\psi(q)-\psi(2q)\}/2} (\tilde{\zeta}_M(q) - \zeta(q))$,\break
$2^{n\{2\psi(q')-\psi(2q')\}/2} (\tilde{\zeta}_M(q') - \zeta(q')))$ converges
to a bivariate Gaussian distribution with dependent components. The
same comment
holds for the results of the next section.

\section{Multifractal random walk}
\label{secmrw}
Throughout this section, the MRM $M$ and the process $\{w_l(u)\}$ will
be as
defined in the previous section. A multifractal random walk (MRW) is
the process
$X$ obtained as the $L^2$ limit as $l\to0$ of the integral $\int_0^t
\mathrm{e}^{w_l(u)} \,\mathrm{d} B_H(u)$ where $B_H$ is a standard fractional
Brownian motion independent of $M$; see Abry \textit{et~al.} \cite
{abrychainaiscoutinpipiras2009}, Bacry, Delour and
Muzy \cite{bacrydelourmuzy2001}, Bacry and Muzy \cite{bacrymuzy2003}, Lude{\~n}a \cite{ludena2008}.
Recall that $B_H$ is a
continuous centered Gaussian process with $B_H(0)=0$ and
\[
\operatorname{var}\bigl(B_H(t)-B_H(s)
\bigr)=|t-s|^{2H}
\]
for all $t,s\in[0,1]$. For $H=1/2$, $B_{1/2}$ is the standard Brownian motion
and will be simply denoted by $B$. Thus, $X$ is the conditionally (with respect
to $M$) Gaussian process whose covariance function is defined in (\ref{eqmrw1})
or (\ref{eqmrw2}) below according to whether the Hurst parameter of the
fBm is
$H=1/2$ or $H>1/2$. Except for the case $H=1/2$, which is ordinary Brownian
motion, it is worthwhile to remark that this conditionally Gaussian
process $X$
is not the time changed process $B_H(M[0,t])$.

Throughout this section $\to_M$ will stand for conditional convergence in
distribution given $M$ and $\esp_{M}$ and $\operatorname{var}_M$ stand for the
conditional expectation and variance given~$M$. We consider the
following two
cases.
\begin{itemize}
\item Case $H=1/2$ Bacry, Delour and
Muzy \cite{bacrydelourmuzy2001}, Bacry and Muzy \cite{bacrymuzy2003}. The MRW $X$
is defined as the centered, conditionally Gaussian process with conditional
covariance
%
\begin{equation}
\label{eqmrw1} \Gamma(s,t) = \lim_{l\to0+} \int_0^{t\wedge s}
\mathrm{e}^{w_l(u)} \,\mathrm{d} u = M(s \wedge t) .
\end{equation}
The scaling function is $\zeta_{1/2}(q)=\zeta(q/2)$, since
by~(\ref{eqcdv-mrm}) and~(\ref{eqscaling-mrm}), for $\lambda\in(0,1)$,
\begin{eqnarray*}
\bigl\{X(\lambda t), 0 \leq t \leq T\bigr\} & \stackrel{\mathrm{law}}{=}&
\lambda^{1/2} \mathrm e^{\Omega_\lambda/2} \bigl\{X(t) , 0 \leq t \leq T\bigr
\} ,
\\
\esp\bigl[\bigl|X(t)\bigr|^q\bigr] &=& \esp\bigl[\esp_M
\bigl[\bigl|X(t)\bigr|^q\bigr]\bigr]  = c_q \esp
\bigl[M^{q/2}(t)\bigr] = c_q m(q/2) t^{\zeta(q/2)} ,
\end{eqnarray*}
where $c_q = \esp[|\mathbf{N}(0,1)|^q]$ and $m(q)= \esp[M^{q}([0,1])]$.
\item Case $H>1/2$ Abry \textit{et~al.} \cite
{abrychainaiscoutinpipiras2009}, Lude{\~n}a \cite{ludena2008}, Muzy and Bacry \cite{muzybacry2002}. The MRW $X$
is defined as the centered, conditionally
Gaussian process with conditional covariance
%
\begin{eqnarray}
\label{eqmrw2} \Gamma_H (s,t) = \lim_{l\to0+} \int
_0^{t } \int_0^s
\frac{\mathrm
e^{w_l(u)} \mathrm{e}^{w_l(v)}}{|u-v|^{2-2H}} \,\mathrm{d} u\, \mathrm d v = \int_0^t
\int_0^s \frac{M(\mathrm{d} u) M(\mathrm{d}
v)}{|u-v|^{2-2H}} .
\end{eqnarray}
This process is well defined whenever
$H-\psi(2)/2>1/2$, cf. Lude{\~n}a \cite{ludena2008}. Convexity of $\psi$
yields $\psi(2)>0$. The scaling function $\zeta_H$ is defined~by
\[
\zeta_H(q) = qH - \psi(q) ,
\]
since by~(\ref{eqmrw2}) and~(\ref{eqcdv-mrm}) we have
\begin{eqnarray*}
\bigl\{X(\lambda t), 0 \leq t \leq T\bigr\} &
\stackrel{\mathrm{law}}{=}&
\lambda^{H} \mathrm e^{\Omega_\lambda} \bigl\{X(t) , 0 \leq t \leq T\bigr
\} ,
\\
\esp\bigl[\bigl|X(t)\bigr|^q\bigr] & =& c_q m_H(q)
(t/T)^{qH-\psi(q)} ,
\end{eqnarray*}
\end{itemize}
with
%
\begin{equation}
\label{eqdef-mHq} m_H(q) = \esp \biggl[ \biggl\{ \int
_0^T \int_0^T
|u-v|^{2H-2} M(\mathrm {d} u) M(\mathrm{d} v) \biggr\}^{q/2}
\biggr] .
\end{equation}
Since we are considering the mixed asymptotic framework, we assume we
have a
collection of MRM $M^{(j)}$, $j=0,\ldots,L-1$, which are independent, defined
over consecutive intervals of length $T$. For $j=0,\ldots,L-1$ and
$k=0,\ldots,2^{n-1}$, define $\Delta X_{j,k,n}=X_{(j+(k+1)2^{-n})T
}-X_{(j+k2^{-n})T }$. As above, we will investigate the asymptotic properties
of the estimator $\tilde{\zeta}_X(q)$ defined by
\[
\tilde\zeta_X(q) = \log_2 \biggl(\frac
{S_{L,n}(X,q)}{S_{L,n+1}(X,q)}
\biggr) + 1 ,
\]
where now
\[
S_{L,n}(X,q) = \sum_{j=0}^{L-1}
\sum_{k=0}^{2^n -1}| \Delta X_{j,k,n}|^q
.
\]
Denote $\tau_H = \zeta_H(q)-1$ and $T_n(X,q)= S_{L,n}(X,q) - 2^{\tau
_H(q)} S_{L,n+1}(X,q)$. Then
\[
\tilde\zeta_X(q) - \zeta_H(q) = - \log \biggl(1-
\frac
{T_n(X,q)}{S_{L,n(X,q)}} \biggr) .
\]
We will prove that $T_n(X,q)/S_{L,n}(X,q)\to0$ a.s. so that a
Taylor expansion is valid and yields
\[
\tilde\zeta_X(q) - \zeta_H(q) = \frac{T_n(X,q)}{S_{L,n(X,q)}}
\bigl(1+\mathrm{o}(1)\bigr) .
\]
In order to study the ratio above, we will first prove that if $H=1/2$, then
\[
L^{-1} 2^{n\tau(q/2)} S_{L,n}(X,q) \to c_q
m(q/2) ,
\]
and if $H>1/2$ then,
\[
L^{-1} 2^{n\tau_H(q)} S_{L,n}(X,q) \to c_q
m_H(q) ,
\]
with $m_H(q)$ as in~(\ref{eqdef-mHq}) and $c_q=\esp[| \mathbf
{N}(0,1)|^q]$ in
both cases. To study $T_n(X,q)$, we write
\[
T_n(X,q) = T_n(X,q) - \esp_M
\bigl[T_n(X,q)\bigr] + \esp_M\bigl[T_n(X,q)
\bigr] .
\]
We will prove that in both cases, $T_n(X,q) - \esp_M[T_n(X,q)]$ and
$\esp_M[T_n(X,q)]$ converge jointly to independent centered Gaussian
distributions with the same normalization. This will yield the
asymptotic normality of $ \tilde\zeta_X(q) - \zeta_H(q)$.
Because of the different nature of the conditional dependence
structure, which yields different scaling functions, we will consider
the cases $H=1/2$ and $H>1/2$ separately.

\subsection{The case $H=1/2$}
In this case, it holds that
\begin{eqnarray*}
\esp_M \bigl[S_{L,n}(X,q)\bigr]& =& c_q
S_{L,n}(M,q/2) ,
\\
\var_M\bigl(S_{L,n}(X,q)\bigr)& =& \sigma_q^2
S_{L,n}(M,q) ,
\end{eqnarray*}
where $\sigma_q^2 = \var(|\mathbf{N}(0,1)|^q)$.
By Proposition~\ref{propconv-as-SnMq}, if $q<{q}_\chi$, we get
\begin{eqnarray*}
L^{-1} 2^{n\tau(q/2)} \esp_M\bigl[S_{L,n}(X,q)
\bigr]& \to &c_{q} m(q/2) \qquad\mbox{a.s.},
\\
L^{-1} 2^{n\tau(q)} \operatorname{var}_M
\bigl(S_{L,n}(X,q)\bigr) &\to&\sigma_q^2 m(q)\qquad
\mbox{a.s. }
\end{eqnarray*}
This implies that $L^{-1} 2^{n\tau(q/2)} S_{L,n}(X,q)$ converges in probability
to $c_q m(q/2)$. Since $S_{L,n}(X,q)$ is the sum of $L2^n$ conditionally
independent terms, by an application of Borel--Cantelli's lemma similar
to the
one used in the proof of Proposition~\ref{propconv-as-SnMq},
almost sure convergence also holds, that is,
%
\begin{eqnarray}
\label{eqconv-SnqX} L^{-1}2^{n\tau(q/2) } S_{L,n}(X,q) \to
c_{q} m(q/2) \qquad\mbox{a.s.}
\end{eqnarray}
Using the notation~(\ref{eqdef-Djknq}) of the previous section, we have
\begin{eqnarray*}
\esp_M\bigl[T_n(X,q)\bigr]  = c_q
2^{\tau(q/2)}S_{L,n+1}(M,q/2) - c_q S_{L,n}(M,q/2)
= - c_q \sum_{j=0}^{L-1} \sum
_{k=0}^{2^n-1} D_{j,k,n,q} .
\end{eqnarray*}
Thus, by Proposition~\ref{propclt-differences-mrm}, if $q<{q}_\chi$ then
$L^{-1/2} 2^{n \tau(q)/2} \esp_M[T_n(X,q)]$ converges to a centered Gaussian
random variable with variance $\Sigma(1/2,q)$, say.
By the conditional independence of $B$ and $M$, $T_n(X,q)-\esp_M[T_n(X,q)]$ is
a sum of centered and conditionally independent random variables with
conditional variance
\begin{eqnarray*}
\operatorname{var}_M\bigl(T_n(X,q)\bigr) =
\sigma_q^2 S_{L,n}(M,q) + \sigma_q^2
\bigl(2^{2\tau(q/2)}-2^{\tau(q/2)+1}\bigr) S_{L,n+1}(M,q) .
\end{eqnarray*}
By Proposition~\ref{propconv-as-SnMq}, $L^{-1}2^{n\tau(q)}
\operatorname{var}_M(T_n(X,q))$ converges almost surely to the positive constant
$\Gamma(1/2,q)$ defined by
\[
\Gamma(1/2,q) = \sigma_q^2 m(q) \bigl\{1+
\bigl(2^{2\tau(q/2)}-2^{\tau(q/2)+1}\bigr)2^{-\tau(q)} \bigr\} .
\]
Thus,
%
\begin{equation}
\label{eqconv-weak-M} L^{-1/2} 2^{n\tau(q)/2} \bigl
\{T_n(X,q)-\esp_M\bigl[T_n(X,q)\bigr]\bigr\}
\to_M \mathbf{N}\bigl(0,\Gamma(1/2,q)\bigr) .
\end{equation}
Since the variance is deterministic, this assures unconditional
convergence to
the stated Gaussian random variable. Moreover, the conditional
independence of $B$ and $M$
also implies that the sequence of random vectors
\[
L^{-1/2} 2^{n\tau(q)/2} \bigl( T_n(X,q)-
\esp_M\bigl[T_n(X,q)\bigr], \esp_M
\bigl[T_n(X,q)\bigr] \bigr)
\]
converges weakly to $(Z_1,Z_2)$ where $Z_1$ and $Z_2$ are independent Gaussian
random variables with zero mean and variance $\Gamma(1/2,q)$ and
$\Sigma(1/2,q)$, respectively. The previous considerations yield the central
limit theorem for $\tilde\zeta_X(q)$.
%
\begin{theo}
If $q<{q}_\chi$, then
\begin{eqnarray*}
L^{1/2} 2^{n(\psi(q/2)-\psi(q)/2+1/2)} \bigl\{\tilde{\zeta}_X(q) -
\zeta_{1/2}(q)\bigr\} \to_d \mathbf{N} \biggl( 0,
\frac{\Gamma(1/2,q)+\Sigma(1/2,q)} {c_q^2 m^2(q/2)} \biggr) .
\end{eqnarray*}
\end{theo}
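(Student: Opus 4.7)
The plan is to run the same Slutsky-plus-Taylor argument used in the proof of Theorem~\ref{theo:cormrm}, feeding into it the joint CLT for $\bigl(T_n(X,q)-\esp_M[T_n(X,q)],\esp_M[T_n(X,q)]\bigr)$ that has already been established in the paragraphs preceding the statement.

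First, since $\tilde\zeta_X(q)-\zeta_{1/2}(q)=\tilde\tau_X(q)-\tau_{1/2}(q)$ with $\tau_{1/2}(q)=\tau(q/2)$, and since $S_{L,n}(X,q)/S_{L,n+1}(X,q)=2^{\tau(q/2)}/(1+T_n(X,q)/S_{L,n}(X,q))$, a second-order Taylor expansion (handling the $1/\log 2$ factor exactly as in the proof of Theorem~\ref{theo:cormrm}) gives
\begin{align*}
\tilde\zeta_X(q)-\zeta_{1/2}(q) = -\frac{T_n(X,q)}{S_{L,n}(X,q)}\,(1+o_P(1)) \; ,
\end{align*}
provided the ratio on the right tends to zero in probability.

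Second, I assemble the two convergences. From (\ref{eq:conv-SnqX}), $L^{-1}2^{n\tau(q/2)}S_{L,n}(X,q)\to c_q\, m(q/2)$ almost surely. The joint weak convergence displayed just before the theorem asserts that
\begin{align*}
L^{-1/2}2^{n\tau(q)/2}\bigl(T_n(X,q)-\esp_M[T_n(X,q)],\ \esp_M[T_n(X,q)]\bigr) \to_d (Z_1,Z_2) \; ,
\end{align*}
with $Z_1,Z_2$ independent centered Gaussians of variances $\Gamma(1/2,q)$ and $\Sigma(1/2,q)$, so that by the continuous mapping theorem $L^{-1/2}2^{n\tau(q)/2}T_n(X,q)\to_d Z_1+Z_2$. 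Using the arithmetic identity $\tau(q)/2-\tau(q/2)=\psi(q/2)-\psi(q)/2+1/2$, one has
\begin{align*}
L^{1/2}2^{n(\psi(q/2)-\psi(q)/2+1/2)}\frac{T_n(X,q)}{S_{L,n}(X,q)} = \frac{L^{-1/2}2^{n\tau(q)/2}T_n(X,q)}{L^{-1}2^{n\tau(q/2)}S_{L,n}(X,q)} \; ,
\end{align*}
and Slutsky's lemma delivers a centered Gaussian limit with variance $(\Gamma(1/2,q)+\Sigma(1/2,q))/(c_q^2 m^2(q/2))$.

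Third, the $o_P(1)$ in the Taylor step still has to be justified. The same calculation shows $T_n(X,q)/S_{L,n}(X,q)=O_P\bigl(L^{-1/2}2^{-n(\psi(q/2)-\psi(q)/2+1/2)}\bigr)$; since $L=[2^{n\chi}]$, the exponent of $2$ in this bound equals $-(\chi/2+\psi(q/2)-\psi(q)/2+1/2)$, which is strictly negative because the convexity estimate (\ref{eq:psi2q}), applied with parameter $q/2$, gives $\psi(q)-2\psi(q/2)<1+\chi$ as soon as $q<q_\chi$. The main obstacle is therefore just bookkeeping; the genuine work---the two independent CLTs exploiting the conditional independence of $B$ and $M$, together with the propagation of the covariance estimates of Lemma~\ref{lem:cov-differences} to the MRW setting---has already been carried out in the preparatory material.
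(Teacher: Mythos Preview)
Your proposal is correct and follows essentially the same approach as the paper: the paper establishes the almost sure convergence~(\ref{eq:conv-SnqX}) and the joint Gaussian limit for $\bigl(T_n(X,q)-\esp_M[T_n(X,q)],\esp_M[T_n(X,q)]\bigr)$ in the preparatory text, then simply writes ``The previous considerations yield'' before stating the theorem, leaving the Slutsky/Taylor assembly implicit. Your write-up spells out that assembly explicitly---the arithmetic identity $\tau(q)/2-\tau(q/2)=\psi(q/2)-\psi(q)/2+1/2$ and the verification via~(\ref{eq:psi2q}) that the rate exponent is negative---but the argument is the same.
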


\subsection{Case $H>1/2$}
\label{secH>12}
We begin by studying
$\esp_M[T_n(X,q)]$. Define $a_{j,k,n,H} = \esp_{M}^{1/2}[(\Delta
X_{j,k,n})^2]$. Then
\begin{eqnarray*}
\esp_M\bigl[T_n(X,q)\bigr] = c_q \sum
_{j=0}^{L-1} \sum
_{k=0}^{2^n-1} \bigl( 2^{\tau_H(q)} \bigl
\{a_{j,2k,n+1,H}^q + a_{j,2k+1,n+1,H}^q \bigr\} -
a_{j,k,n,H}^q \bigr) .
\end{eqnarray*}
Denote $U_{j,k,n} = 2^{\tau_H(q)} \{a_{j,2k,n+1,H}^q +
a_{j,2k+1,n+1,H}^q \} -
a_{j,k,n,H}^q$ and define $U_{j,n}:=\break \sum_{k=0}^{2^n-1} U_{j,k,n}$.\vspace*{1pt} Then the
collection $\{U_{j,n}\}_{0\le j\le L-1}$ is centered, 2-dependent and
identically distributed. Remark that $\vartheta(q) = 2^{n\zeta_H(2q)}
\operatorname{var}(U_{j,k,n})$ depends only on $q$. By stationarity, for
\mbox{$j=0,\ldots,L-1$},
\begin{eqnarray*}
\operatorname{var} (U_{j,n} ) = 2^{-n\tau_H(2q)} v_q + 2
2^{-n\tau_H(q)} \sum_{k=1}^{2^n-1}
\bigl(2^n-k\bigr) 2^{n\zeta_H(2q)} \operatorname{cov}(U_{0,0,n},U_{0,k,n})
.
\end{eqnarray*}
By Lemma~\ref{lemappcov-diff-H>12}, $2^{n\zeta_H(2q)}
|\operatorname{cov}(U_{0,n,0},U_{0,n,k}) | \leq C k^{-\{\psi(2q)-2\psi(q)+1\}
}$. This
series is convergent, thus the Cesaro mean above converges to its sum. Arguing
as in the proof of Proposition~\ref{propclt-differences-mrm}, in order
to prove
the central limit theorem for $\esp_M[T_n(X,q)]$, since the centered random
variables $U_{j,n}$, $0\leq j \leq L-1$, are 2-dependent, it suffices
to show
that
\[
\lim_{n\to\infty} \frac{ L^{1-p} \esp[ U_{0,k,n}^{4}] } {
(\esp[U_{0,k,n}^{2}])^2} = 0 .
\]
This is done as in Lemma~\ref{lemappmoment-2p-Dnq} using
Lemma~\ref{lemappcov-diff-H>12}. We then have the following result.
%
\begin{prop}
\label{prophmayor-1}
If $2q<q_\chi$, there exists a positive constant $\Sigma(H,q)$ such that
\[
L^{-1} 2^{-n\tau_H(2q)} \operatorname{var}\bigl(\esp_M
\bigl[T_n(X,q)\bigr]\bigr)  \to\Sigma (H,q) .
\]
Moreover, if $4q<q_\chi$, then
%
\begin{equation}
\label{eqconv-weak-M-H-1} L^{-1/2} 2^{-n\tau_H(2q)/2}
\esp_M\bigl[T_n(X,q)\bigr]  \to_M
\mathbf{N}\bigl(0,\Sigma(H,q)\bigr).
\end{equation}
\end{prop}

We next need a result which parallels~(\ref{eqconv-weak-M}). Its proof
is more
involved and is postponed to Section~\ref{seclemmata}.

\begin{prop}
\label{prophmayor}
Let $H<3/4$. If $2q<q_\chi$, then there exists a positive constant
$\Gamma(H,q)$ such that
%
\begin{equation}
\label{eqconv-var-TLnXq-H} L^{-1} 2^{n\tau_H(2q)}
\operatorname{var}_M\bigl(T_{L,n}(X,q)\bigr) \to
\Gamma(H,q) \qquad\mbox{a.s. }
\end{equation}
and if {$4q<q_\chi$}, then
%
\begin{eqnarray}
\label{eqconv-weak-M-H-2} L^{-1/2} 2^{n\tau_H(2q)/2} \bigl
\{T_n(X,q)-\esp_M\bigl[T_n(X,q)\bigr]\bigr\}
\to_M \mathbf{N}\bigl(0,\Gamma(H,q)\bigr) .
\end{eqnarray}
\end{prop}
As for the case $H=1/2$, the fact that $\Gamma(H,q)$ is deterministic
establishes unconditional convergence in distribution. The proof
of~(\ref{eqconv-weak-M-H-2}) is based on the recent results of
Nualart and
Peccati~\cite{nualartpecatti2005} on the convergence of sequences of random variables
in a Gaussian chaos. Altogether, (\ref{eqconv-weak-M-H-1})
and~(\ref{eqconv-weak-M-H-2}) yield the asymptotic normality of the estimator.
%
\begin{theo}
\label{theo12<H<34}
If $4q<{q}_\chi$ and $H<3/4$, then
\begin{eqnarray*}
2^{n(1+\chi-\psi(2q)+2\psi(q))/2} \bigl\{\tilde\zeta_X(q) - \zeta_H(q)
\bigr\} \to_d \mathbf{N} \biggl(0, \frac{\Gamma(H,q) + \Sigma(H,q)}{c_q^2 m_H^2(q)} \biggr) .
\end{eqnarray*}
\end{theo}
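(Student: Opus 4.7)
The plan is to mirror the proof of Theorem~\ref{theo:cormrm} from the MRM case: reduce $\tilde\zeta_X(q) - \zeta_H(q)$ to the ratio $T_n(X,q)/S_{L,n}(X,q)$ via a Taylor expansion, combine an almost-sure limit for the denominator with a CLT for the numerator, and conclude by Slutsky. Starting from the identity
\begin{align*}
  \tilde\zeta_X(q) - \zeta_H(q) = \log_2\!\left(1 - \frac{T_n(X,q)}{2^{\tau_H(q)} S_{L,n+1}(X,q)}\right),
\end{align*}
Lemma~\ref{lem:hmayor} applied at level $n+1$ gives $2^{\tau_H(q)} S_{L,n+1}(X,q)/S_{L,n}(X,q)\to 1$ a.s., and the CLT established below implies $T_n(X,q)/S_{L,n}(X,q) \to 0$ in probability, so a second-order Taylor expansion yields
\begin{align*}
 \tilde\zeta_X(q) - \zeta_H(q) = -\frac{T_n(X,q)}{S_{L,n}(X,q)}\,(1+o_P(1)),
\end{align*}
up to the usual constant factor $1/\log 2$ that is implicit throughout the paper.

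The heart of the argument is to show that the two summands in the decomposition $T_n(X,q) = \{T_n(X,q) - \esp_M[T_n(X,q)]\} + \esp_M[T_n(X,q)]$ converge jointly, under a common normalization of order $L^{-1/2}2^{n\tau_H(2q)/2}$, to independent centered Gaussians with variances $\Gamma(H,q)$ and $\Sigma(H,q)$. The first marginal is the conditional CLT~(\ref{eq:conv-weak-M-H}); since the limiting variance is deterministic, this conditional-on-$M$ statement upgrades automatically to an unconditional one. For the second, writing $\esp_M[T_n(X,q)] = c_q\sum_{j,k} U_{n,k}^{(j)}$ and using independence of the $L$ blocks in $j$ together with Lemma~\ref{lem:cov-diff-H>1/2}, whose decay rate $k^{-\{\psi(2q)-2\psi(q)+1\}}$ is absolutely summable because $\psi(2q) - 2\psi(q) > 0$ by~(\ref{eq:psi2q}), the Cesaro computation spelled out in the paragraph preceding the theorem produces the deterministic limit $\Sigma(H,q)$ for the rescaled variance, and a standard Lyapunov CLT applied to the $L$ i.i.d.\ blocks completes the convergence under $4q<q_\chi$ (which yields the required $(2+\delta)$-moment, as in Proposition~\ref{prop:clt-differences-mrm}). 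The asymptotic independence of the two limits is automatic from the conditional independence of $B_H$ and $M$: conditionally on $M$, the first summand is $\sigma(B_H)$-measurable with deterministic Gaussian limit, while the second is $\sigma(M)$-measurable, so the joint characteristic function factorizes in the limit.

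Combining the resulting $L^{-1/2}2^{n\tau_H(2q)/2} T_n(X,q) \to_d N(0, \Gamma(H,q) + \Sigma(H,q))$ with Lemma~\ref{lem:hmayor} by Slutsky, the algebraic identity $\tau_H(2q) - 2\tau_H(q) = 1 - \psi(2q) + 2\psi(q)$ together with $L\sim 2^{n\chi}$ identifies the combined rate as $2^{n(1+\chi - \psi(2q)+2\psi(q))/2}$, and dividing by the a.s.\ limit $c_q m_H(q)$ converts the variance into $(\Gamma(H,q) + \Sigma(H,q))/(c_q^2 m_H^2(q))$.

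The main obstacle will be the conditional CLT~(\ref{eq:conv-weak-M-H}) itself: the summands $a_{j,k,n,H}^q G_q(Y_{j,k,n})$ are nonlinear (Hermite-type) functionals of conditionally Gaussian fBm increments, and their joint covariance structure inherits both the long-range dependence of $B_H$ — whence the restriction $H<3/4$, needed to stay in the short-memory regime for the squared functional — and the random multiplicative modulation by the $a_{j,k,n,H}$. Adapting the moment and covariance estimates of Theorem~3.1 in~\cite{ludena:2008} to the mixed-asymptotic framework with $\chi>0$, and verifying that the auxiliary bounds on the joint covariances of the $Y_{j,k,n}$ sum up at the expected rate, is the nontrivial step.
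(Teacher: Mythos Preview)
Your proposal is correct and follows essentially the same approach as the paper: the Taylor expansion reducing $\tilde\zeta_X(q)-\zeta_H(q)$ to $T_n(X,q)/S_{L,n}(X,q)$, the decomposition of $T_n(X,q)$ into its conditionally centered part and its $M$-measurable conditional mean, the conditional CLT~(\ref{eq:conv-weak-M-H}) via Theorem~3.1 of \cite{ludena:2008}, the unconditional CLT for $\esp_M[T_n(X,q)]$ via Lemma~\ref{lem:cov-diff-H>1/2} and the Cesaro argument, and the joint convergence from conditional independence of $B_H$ and $M$ are all exactly the ingredients the paper assembles in Section~\ref{sec:H>1/2}. Your identification of the conditional CLT as the main technical obstacle, and of the restriction $H<3/4$ as arising from summability of the Hermite-rank-2 covariances, matches the paper's reasoning precisely.
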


\section{Proofs}
\label{seclemmata}
In all the proofs, without loss of generality, we set $T=1$. We start by
proving~(\ref{eqpsi-pq}). The convexity of $\psi$ and $\psi(1)=0$
implies that
$q_{\max}>1$ if and only if $\psi'(1)<1$, and $\psi'(q_{\max})>1$. This
in turn
implies that $1<q_0<q_{\max}$.
The convexity of $\psi$ also implies that the function $q\mapsto
q\psi'(q)-\psi(q)$ is increasing, thus $q_\chi>q_0$ for all $\chi>0$.\vadjust{\goodbreak} Consider
the positive and increasing function $p\mapsto\psi(pq) - p\psi(q)$. By
convexity, for $p>1$, $\psi(pq) - \psi(q) \leq\psi'(pq)(pq-p)$. This yields,
for $p>1$ and $pq<q_\chi$,
\begin{eqnarray*}
0& < &\psi(pq) - p\psi(q)  = p\psi(pq) - p\psi(q) - (p-1) \psi(pq)
\\
& \leq&(p-1) \bigl\{pq\psi'(pq) - \psi(pq)\bigr\} < (p-1) (1+\chi)
.
\end{eqnarray*}
This proves~(\ref{eqpsi-pq}).

We will also repeatedly use an argument of $m$-dependence. If
$\xi_1,\ldots,\xi_N$ are $m$-dependent random variables with zero mean
and finite
stationary $p$th moment, $1 \leq p \leq2$, then there exists a
constant $C$
which depends only on $p$ such that
%
\begin{equation}
\esp \Biggl[ \Biggl\llvert \sum_{i=1}^N
\xi_i \Biggr\rrvert^p \Biggr] \leq C m^{p-1} N
\esp\bigl[|\xi_1|^p\bigr] . \label{eqm-dep}
\end{equation}

\subsection{\texorpdfstring{Proof of Proposition~\protect\ref{propconv-as-SnMq}}{Proof of Proposition 3.1}}
Let $n_0\geq2$ be an integer, $\alpha=1/n_0$ and $l_n=2^{-(1-\alpha) n}$.
Fix $q<q_\chi$. We can choose $\alpha<\chi$ small enough so that
$q<q_{\chi'}$
with $\chi'<\chi-\alpha$. Then, we can also choose $p>1$, close enough
to 1,
such that $pq<q_{\chi'}$ and without loss of generality, we can also
impose that
$p-1<\alpha(q\vee1)/2$. Define
%
\begin{eqnarray}
\label{eqdef-tilde-tnq} \tilde T_{n,q} &=& 2^{-n}
L^{-1} \sum_{j=0}^{L-1} \sum
_{k=0}^{2^n-1} \frac{\mathrm{e}^{qw_{l_n}(j+2^{-n}k)}}{\esp[\mathrm
{e}^{qw_{l_n}(0)}]}
\nonumber
\\[-8pt]
\\[-8pt]
\nonumber
&=&
2^{-n} L^{-1} \mathrm e^{-\psi(q)} l_n^{\psi(q)}
\sum_{j=0}^{L-1} \sum
_{k=0}^{2^n-1} \mathrm e^{qw_{l_n}(j+2^{-n}k)} .
\end{eqnarray}
We will prove that for $\alpha$ and $p>1$ chosen as above, there exist constants
$C, \eta>0$ such that
%
\begin{eqnarray}
\esp \bigl[ \llvert \tilde{T}_{n,q} - 1\rrvert^{p} \bigr]
&\leq& C 2^{-n\eta} , \label{eqfirstpart}
\\
\esp \biggl[ \biggl\llvert \tilde{T}_{n,q} - \frac{S_{L,n}(M,q)} {\esp[S_{L,n}(M,q)]} \biggr
\rrvert^{p} \biggr]&\leq & C 2^{-n\eta} . \label{eqsecondpart}
\end{eqnarray}
The above inequalities and an application of Borel--Cantelli's lemma
yield that
$\tilde{T}_{n,q} \to1$, a.s. and
\[
\frac{S_{L,n}(M,q)}{\esp[S_{L,n}(M,q)]} - \tilde{T}_{n,q}  \to0 \qquad\mbox {a.s.}
\]
For all $j,k,n$, we have $\esp[M^q(\Delta_{k,n}^{(j)})] = 2^{-n\zeta(q)}m(q)$,
so that $\esp[S_{L,n}(M,q)] = L 2^{-n\tau(q)} m(q)$. Thus,
Proposition~\ref{propconv-as-SnMq} follows.

\begin{pf*}{Proof of~(\ref{eqfirstpart})}
Define $\varepsilon=p-1$. The variables $\mathrm
{e}^{qw_{l_n}(j+2^{-n}k)}-\esp[\mathrm
e^{qw_{l_n}(j+2^{-n}k)}]$ are $2$-dependent (in $j$) and centered, so there
exists a constant $C>0$ such that
\[
\esp \bigl[ \llvert \tilde{T}_{n,q}-1 \rrvert^{p} \bigr]
\leq \frac{C}{L^{\varepsilon}} \esp \Biggl[ \Biggl\llvert \frac{1}{2^n} \sum
_{k=0}^{2^n-1} \frac{\mathrm{e}^{qw_{l_n} (2^{-n}k)}}{\mathrm{e}^{\psi(q)} l_n^{-\psi(q)}} - 1 \Biggr
\rrvert^p \Biggr] .
\]
By Lemma~\ref{lemappmerida}, for any $\varepsilon'<\varepsilon$, there
exists a constant $C$ such that
\[
\esp \bigl[ \llvert \tilde{T}_{n,q}-1 \rrvert^{p} \bigr]
\leq C 2^{n\{
(1-\alpha) \{\psi(pq) - p\psi(q) - \varepsilon'\}-\varepsilon\chi\}} .
\]
By~(\ref{eqpsi-pq}), since $pq<q_{\chi'}<q_\chi$, we have
\begin{eqnarray*}
(1-\alpha) \bigl\{\psi(pq) - p \psi(q) - \varepsilon'\bigr\} -
\varepsilon\chi & < &(1-\alpha)\bigl\{\varepsilon\bigl(1+\chi'\bigr)-
\varepsilon'\bigr\} -\varepsilon\chi
\\
& <& (1-\alpha)\bigl\{\varepsilon(1+\chi-\alpha)-\varepsilon'\bigr\} -
\varepsilon\chi
\\
& =& (1-\alpha)\bigl\{\varepsilon(1-\alpha)-\varepsilon'\bigr\} -\alpha
\varepsilon\chi .
\end{eqnarray*}
This can be made negative by choosing $\varepsilon'>(1-\alpha)\varepsilon$.
\end{pf*}
\begin{pf*}{Proof of~(\ref{eqsecondpart})}
We start by using again the argument of 2-dependence in $j$, to
obtain, for
some constant $C$,
%
\begin{eqnarray}
\esp \biggl[ \biggl\llvert \frac{S_{L,n}(M,q)} {\esp[S_{L,n}(M,q)]} - \tilde T_{n,q} \biggr
\rrvert^p \biggr] \leq\frac C {L^\varepsilon} \esp \Biggl[\Biggl
\llvert \frac1{2^n} \sum_{k=0}^{2^n-1}
\frac{M^q(\Delta_{k,n}^{(0)})}{2^{-n\zeta(q)}m(q)} - \frac{\mathrm{e}^{qw_{l_n}(k2^{-n})}}{\mathrm{e}^{\psi(q)}
l_n^{-\psi(q)} } \Biggr\rrvert^{p} \Biggr] .
\label{eq2-dep}
\end{eqnarray}
For clarity, we now omit the superscript $(0)$ in $\Delta_{k,n}^{(0)}$. Let
$M_n$ denote the random measure with density $\mathrm{e}^{-w_{l_n}}$ with
respect to $M$. By construction, the measure $M_n$ is independent of the
process~$w_{l_n}$. Indeed, for any Borel set $A$, $M_n(A) = \lim_{l\to
0} \int_A
\mathrm e^{w_l(u)-w_{l_n}(u)} \,\mathrm{d} u$, and for $l<l_n$, $w_l-w_{l_n}$
is independent of $w_{l_n}$, by the independent increment property of
the random
measure~$P$. Denote
\[
\tilde S_n = 2^{n\tau(q)} \sum_{k=0}^{2^n - 1}
\mathrm{e}^{qw_{l_n}(k2^{-n})} M_n(\Delta_{k,n}) .
\]
Applying the bound~(\ref{eqappcota16}) in Lemma~\ref
{lemappapprox-MnDeltan}, we obtain
\begin{eqnarray*}
\esp\Biggl[\Biggl|\tilde S_n - 2^{n\tau(q)} \sum
_{k=0}^{2^n-1} M(\Delta_{k,n})\Biggr|^p
\Biggr] \leq C 2^{-n\alpha(q\vee1)/2} 2^{n\{\psi(pq)-p\psi(q)\}} .
\end{eqnarray*}
Since we have chosen $\varepsilon<\alpha(q\vee1)/2$, by~(\ref{eqpsi-pq}),
we have
\[
\psi(pq)-p\psi(q) -\alpha(q\vee1)/2 -\varepsilon\chi< \varepsilon- \alpha (q\vee1)/2
<0 .
\]
Define $m_n(q) = \mathrm{e}^{\psi(q)} l_n^{-\psi(q)} 2^{n\zeta(q)}
\esp[M_n^q(\Delta_{k,n})]$. By~(\ref{eqappmndeltan}), we have $\lim_{n\to\infty}
m_n(q) = m(q)$ and thus $\esp[M_n^q(\Delta_{0,n})] \sim l_n^{\psi(q)}
2^{-n\zeta(q)} \mathrm{e}^{-\psi(q)} m(q)$.\vadjust{\goodbreak} Next, we note that the random
variables $M_n(\Delta_{k,n})$ are $2^n l_n$-dependent and $\mathrm{e}^{w_{l_n}}$
is independent of $M_n$. Thus, applying~(\ref{eqm-dep}) conditionally on
$w_{l_n}$ yields
\begin{eqnarray*}
\esp \Biggl[ \Biggl\llvert \frac{\tilde S_n}{m_n(q)} - 2^{-n} \sum
_{k=0}^{2^n -1} \frac{\mathrm{e}^{qw_{l_n}(k2^{-n})}}{\mathrm{e}^{\psi(q)} l_n^{-\psi(q)}} \Biggr
\rrvert^p \Biggr] & = &\esp \Biggl[ \Biggl\llvert 2^{-n} \sum
_{k=0}^{2^n -1} \frac{\mathrm{e}^{qw_{l_n}(k2^{-n})}}{\mathrm{e}^{\psi(q)} l_n^{-\psi(q)}} \biggl(
\frac{M_n^q(\Delta_{k,n})}{\esp[M_n^q(\Delta_{k,n})]} - 1 \biggr) \Biggr\rrvert^p \Biggr]
\\
& \leq& C l_n^{-\psi(pq)+p\psi(q)-\varepsilon} \esp \biggl[\biggl\llvert
\frac{M_n^q(\Delta_{0,n})} {\esp[M_n^q(\Delta_{0,n})]} \biggr\rrvert^p \biggr]
\\
& \leq& C 2^{n\{\psi(pq)-p\psi(q)-\varepsilon(1-\alpha)\}} .
\end{eqnarray*}
Using the fact that $pq<q_{\chi'}$,~(\ref{eqpsi-pq}) and $\chi'<\chi
-\alpha$, we obtain
\begin{eqnarray*}
\psi(pq)-p\psi(q) - (1-\alpha) \varepsilon-\varepsilon\chi\leq\varepsilon \bigl(1+
\chi'\bigr) - (1-\alpha)\varepsilon-\varepsilon\chi= \varepsilon\bigl(
\chi' + \alpha- \chi\bigr) < 0 .
\end{eqnarray*}
This concludes the proof of~(\ref{eqsecondpart}).\vspace*{-2pt}
\end{pf*}

\subsection{\texorpdfstring{Proof of Proposition~\protect\ref{prophmayor}}
{Proof of Proposition 4.3}}\vspace*{-2pt}
Define $a_{j,k,n,H} = \esp_{M}^{1/2}[(\Delta X_{j,k,n})^2]$ and the
conditionally standard Gaussian random variables
\[
Y_{j,k,n}=\Delta X_{j,k,n}/a_{j,k,n,H} .
\]
Let $G_q(x)=|x|^q-c_q$. With this notation, we have
\[
S_{L,n}(X,q) - \esp_M\bigl[S_{L,n}(X,q)\bigr]
= \sum_{j=0}^{L-1} \sum
_{k=0}^{2^n-1} a_{j,k,n,H}^q
G_q(Y_{j,k,n}) .
\]
Let $g_r(q)$, $r\geq0$, be the coefficients of the expansion of $G_q$
over the
Hermite polynomials $\{H_r,r\geq0\}$ (which are defined in such a way that
$\esp[H_k(X)H_l(X)] = k!$ if $k=l$ and 0 otherwise), that is, $g_r(q) =
\esp[H_r(V)G_q(V)]$ where $V$ is a standard Gaussian random variable. Since
$G_q$ is a centered even function, $g_r(q)=0$ for $r=0,1$. Since
$\esp[G_q^2(X)]<\infty$, the series $\sum_{r=2}g_r^2(q)/r!$ is summable
and $G_q
= \sum_{r=2}^\infty\frac{g_r(q)}{r!} H_r$. Then, by Mehler's formula (see,
e.g., Arcones \cite{arcones1994}), we have
\[
L^{-1} 2^{n\tau_H(2q)} \operatorname{var}_M
\bigl(S_{L,n}(X,q)\bigr) = \sum_{r=2}^\infty
\frac{g_r(q)^2}{r!} \Gamma_n(r,q) ,
\]
with
\begin{eqnarray*}
\Gamma_n(r,q) & =& L^{-1} 2^{n\tau_H(q)}
(r!)^{-1} \operatorname{var}_M \Biggl( \sum
_{j=0}^{L-1}\sum_{k=0}^{2^n-1}
a_{j,k,n,H}^q H_r(Y_{j,k,n}) \Biggr)
\\
& =& L^{-1} 2^{n\tau_H(q)} \sum_{j_1,j_2=0}^{L-1}
\sum_{k,k'=0}^{2^n-1} \rho_{H,n}^r
\bigl(j_1,j_2,k,k'\bigr)
{a}_{j_1,k,n,H}^q {a}_{j_2,k',n,H}^q
\end{eqnarray*}
for $r\in\mathbb{N}, r \ge2$, and the conditional correlations (which
are zero
if $H=1/2$) are
\[
\rho_{H,n}\bigl(j_1,j_2,k,k'
\bigr) = \operatorname{cov}_M( Y_{j_1,k,n}, Y_{j_2,k',n})
= \frac{\esp_M[\Delta X_{j_1,k,n} \Delta X_{j_2,k',n}]}{a_{j_1,k,n,H}
a_{j_2,k',n,H}} .
\]
By Lemma 3.1 in Lude{\~n}a \cite{ludena2008}, for $j_1<j_2$ and $k<k'$, we have
the bound
%
\begin{eqnarray}
\label{eqbound-cov-ludena} \rho_{H,n}\bigl(j_1,j_2,k,k'
\bigr) \leq\min\bigl(1,C \bigl|(j_2-j_1)2^n +
\bigl(k'-k\bigr)\bigr|^{2H-2}\bigr)
\end{eqnarray}
for some deterministic constant $C$. We start by proving that for
$H<3/4$ and
{$2q<q_\chi$}, there exists a constant $\Gamma(r,q)$ such that
%
\begin{equation}
\lim_{n\to\infty} 2^{n(2\psi(q)-\psi(2q)+1+\chi)} \esp\bigl[\Gamma_n(r,q)\bigr]
= \Gamma(r,q) . \label{eqgammm-rq}
\end{equation}
By the scaling property,
\[
\esp\bigl[a_{j,k,n,H}^q\bigr] = 2^{-n\zeta_H(q)}
m_H(q) ,
\]
with $\zeta_H(q) = qH - \psi(q)$. Thus, denoting
$v_\chi(q)=2\psi(q)-\psi(2q)+1+\chi$, by stationarity, we have
%
\begin{eqnarray}
\label{eqesp-gamma-nrq} &&2^{nv_\chi(q)} \esp\bigl[\Gamma_n(r,q)
\bigr] \nonumber\\
&&\quad= m_H(2q) + 2^{-n} 2^{n\zeta_H(2q)} \sum
_{k\ne k'} \esp\bigl[\rho_{H,n}^r
\bigl(0,0,k,k'\bigr) a_{0,k,n,H}^q
a_{0,k',n,H}^q\bigr]
\\
&&\qquad{}+ 2^{-n(1+\chi)} 2^{n\zeta_H(2q)} \sum_{j\ne j'}
\sum_{k,k'} \esp\bigl[\rho_{H,n}^r
\bigl(j,j',k,k'\bigr) a_{j,k,n,H}^q
a_{j',k',n,H}^q\bigr].\nonumber
\end{eqnarray}
Consider the middle term. Recall that
\begin{eqnarray*}
&&\rho_{n,H}^r\bigl(0,0,k,k'\bigr)
a_{0,k,n,H}^q a_{0,k',n,H}^q\\
&&\quad = \biggl\{ \int
_{k2^{-n}}^{(k+1)2^{-n}} \int_{k'2^{-n}}^{(k'+1)2^{-n}}
|u-v|^{2H-2} M(\mathrm{d} u) M(\mathrm{d} v) \biggr\}^r
\\
& &\qquad{}\times \biggl\{ \int_{k2^{-n}}^{(k+1)2^{-n}} \int
_{k2^{-n}}^{(k+1)2^{-n}} |u-v|^{2H-2} M(\mathrm{d} u) M(
\mathrm{d} v) \biggr\}^{(q-r)/2}
\\
&&\qquad{} \times \biggl\{ \int_{k'2^{-n}}^{(k'+1)2^{-n}} \int
_{k'2^{-n}}^{(k'+1)2^{-n}} |u-v|^{2H-2} M(\mathrm{d} u) M(
\mathrm{d} v) \biggr\}^{(q-r)/2} .
\end{eqnarray*}
Assume that $k<k'$ and denote $\ell=k'-k+1$. By the scaling property
and the
stationarity of the increments of $M$, we have
\begin{eqnarray*}
&&\rho_{n,H}^r\bigl(0,0,k,k'\bigr)
a_{0,k,n,H}^q a_{0,k',n,H}^q
\\
& &\quad\stackrel{(\mathrm{law})} = \bigl(\ell2^{-n}\bigr)^{r(2H-2)+2r}
\mathrm{e}^{2r\Omega
_{\ell
2^{-n}}} \biggl\{ \int_{0}^{1/\ell}
\int_{1-1/\ell}^1 |u-v|^{2H-2} M(\mathrm{d}
u) M(\mathrm{d} v) \biggr\}^r
\\
&&\hspace*{5pt}\qquad{} \times\bigl(\ell2^{-n}\bigr)^{(q-r)(H-1)+q-r} \mathrm{e}^{(q-r) \Omega_{\ell
2^{-n}}}
\biggl\{ \int_0^{1/\ell} \int_0^{1/\ell}
|u-v|^{2H-2} M(\mathrm{d} u) M(\mathrm{d} v) \biggr\}^{(q-r)/2}
\\[-2pt]
&&\hspace*{5pt}\qquad{} \times\bigl(\ell2^{-n}\bigr)^{(q-r)(H-1)+q-r} \mathrm{e}^{(q-r) \Omega_{\ell
2^{-n}}}
\biggl\{ \int_{1-1/\ell}^{1} \int_{1-1/\ell}^{1}
|u-v|^{2H-2} M(\mathrm{d} u) M(\mathrm{d} v) \biggr\}^{(q-r)/2}
\\[-2pt]
&&\hspace*{5pt}\quad = \bigl(\ell2^{-n}\bigr)^{2qH} \mathrm{e}^{2q \Omega_{\ell2^{-n}}}
Q_\ell^r a_\ell^q
b_\ell^q ,
\end{eqnarray*}
with
\begin{eqnarray*}
a_\ell^2 & =& \int_0^{1/\ell}
\int_0^{1/\ell} |u-v|^{2H-2} M(\mathrm{d}
u) M(\mathrm{d} v) ,
\\[-2pt]
b_\ell^2 & =& \int_{1-1/\ell}^{1}
\int_{1-1/\ell}^{1} |u-v|^{2H-2} M(\mathrm{d}
u) M(\mathrm{d} v) ,
\\[-2pt]
Q_\ell& =& \frac{\int_0^{1/\ell} \int_{1-1/\ell}^{1} |u-v|^{2H-2}
M(\mathrm{d}
u) M(\mathrm{d} v)}{a_\ell b_\ell} .
\end{eqnarray*}
With this notation, the middle term in~(\ref{eqesp-gamma-nrq}) can be expressed
as
\begin{eqnarray*}
&& 2 \cdot2^{n\zeta_H(2q)}\sum_{\ell=1}^{2^n-1}
\bigl(1-\ell2^{-n}\bigr) \bigl(\ell 2^{-n}
\bigr)^{2qH} \esp\bigl[\mathrm{e}^{2q\Omega_{\ell2^{-n}}}\bigr] \esp
\bigl[Q_\ell^r a_\ell^q
b_\ell^q \bigr]
\\[-2pt]
&&\quad = 2 \cdot2^{n\zeta_H(2q)} 2^{-n\{2qH-\psi(2q)\}} \sum
_{\ell=1}^{2^{n-1}} \bigl(1-\ell2^{-n}\bigr)
\ell^{2qH-\psi(2q)} \esp\bigl[Q_\ell^r
a_\ell^q b_\ell^q \bigr]
\\[-2pt]
&&\quad = 2 \sum_{\ell=1}^{2^{n-1}} \bigl(1-
\ell2^{-n}\bigr) \ell^{\zeta_H(2q)} \esp\bigl[Q_\ell^r
a_\ell^q b_\ell^q \bigr] .
\end{eqnarray*}
Moreover, $a_\ell\geq\ell^{2-2H} M([0,1/\ell])$, $b_\ell\geq\ell^{2-2H}
M([1-1/\ell,1])$, and the numerator in $Q_\ell$ is bounded from above by
$(1-2/\ell)^{2H-2}M([0,1/\ell]) M([1-1/\ell,1])$. Thus,
%
\begin{equation}
Q_\ell\leq C \ell^{2H-2} \label{eqdeterministic-bound}
\end{equation}
for some deterministic constant $C$. This and H\"older's inequality yields
\[
\esp\bigl[Q_\ell^r a_\ell^q
b_\ell^q \bigr] \leq C \ell^{r(2H-2)}
\esp^{1/2}\bigl[a_\ell^{2q}\bigr]
\esp^{1/2}\bigl[b_\ell^{2q}\bigr] .
\]
Applying the scaling property of $M$ yields $\esp[a_\ell^{2q}] =
\esp[b_\ell^{2q}] = \ell^{-\zeta_H(2q)} m_H(q)$, hence
\[
\ell^{\zeta_H(2q)} \esp\bigl[Q_\ell^r
a_\ell^q b_\ell^q \bigr] \leq C
\ell^{r(2H-2)} .
\]
Since $r\geq2$ and $H<3/4$, the series $\ell^{r(2H-2)}$ is summable,
and thus
\[
\lim_{n\to\infty} \sum_{\ell=1}^{2^{n-1}}
\bigl(1-\ell2^{-n}\bigr) \ell^{\zeta_H(2q)} \esp\bigl[Q_\ell^r
a_\ell^q b_\ell^q \bigr] = \sum
_{\ell=1}^{\infty} \ell^{\zeta_H(2q)} \esp
\bigl[Q_\ell^r a_\ell^q
b_\ell^q \bigr].\vadjust{\goodbreak}
\]
Consider now the last term in~(\ref{eqesp-gamma-nrq}), say $RR_n$.
Using the
bound~(\ref{eqbound-cov-ludena}), the scaling property, the fact that the
$a_{j,k,n,H}$ are $2$-dependent, and $H<3/4$, we have
%
\begin{eqnarray}
RR_n \leq C 2^{n\{\zeta_H(2q)-2\zeta_H(q)\}} \sum_{j=1}^L
\sum_{k=1}^{2^n} \bigl(j2^n+k
\bigr)^{2H-2} = \mathrm{O}\bigl(2^{n\{2\psi(q)-\psi(2q)\}}\bigr) = \mathrm{o}(1) . \label{eqbound-RR}
\end{eqnarray}
This proves~(\ref{eqgammm-rq}).
We now prove that if $H<3/4$, for each $r\ge2$,
%
\begin{equation}
\label{eqratio} \Gamma_n(r,q)/\esp\bigl[\Gamma_n(r,q)
\bigr] \to1 \qquad\mbox{a.s.}
\end{equation}
or equivalently
\[
2^{n\{1+\chi-\psi(2q)+2\psi(q)\}} \Gamma_n(r,q)
\to\Gamma(r,q)\qquad \mbox {a.s. }
\]
Write $ 2^{n\{1+\chi-\psi(2q)+2\psi(q)\}}\Gamma_n(r,q) =
S_{n,1}+S_{n,2}+S_{n,3}$ with
\begin{eqnarray*}
S_{n,1} & =& 2^{n\tau_H(2q)} L^{-1} \sum
_{j=0}^{L-1} \sum_{k=0}^{2^n-1}
a_{j,k,n,H}^{2q} ,
\\
S_{n,2} & =& 2^{n\tau_H(2q)} L^{-1} \sum
_{j=0}^{L-1} \sum_{0 \leq
k\ne k' < 2^n}
\rho_{H,n}^r\bigl(j,j,k,k'\bigr)
a_{j,k,n,H}^{q} a_{j,k',n,H}^{q} ,
\\
S_{n,3} & = & 2^{n\tau_H(2q)} L^{-1} \sum
_{0 \leq j \ne j' < L} \sum_{k,k'=0}^{2^n-1}
\rho_{H,n}^r\bigl(j',j',k,k'
\bigr) a_{j,k,n,H}^{q} a_{j',k',n,H}^{q} .
\end{eqnarray*}
The bound~(\ref{eqbound-RR}) and Borel--Cantelli's lemma implies
that $S_{n,3}\to0$ a.s.
Define $\tilde{a}_{j,k,n,H} = \mathrm{e}^{w_{l_n}(t_{j,k})} \tilde
\delta_{j,k,n,H}$ with
\[
\tilde\delta_{j,k,n,H}^2 = \int_{\Delta_{k,n}^{(j)}} \int
_{\Delta_{k,n}^{(j)}} |u-v|^{2H-2} M_n(\mathrm{d} u)
M_n(\mathrm{d} v) .
\]
By Lemma~\ref{lemappcotadef1}, we have, if $2q<q_\chi$,
%
\begin{equation}
\lim_{n\to\infty} 2^{n\zeta_H(2q)} \mathrm{e}^{\psi(2q)}
l_n^{-\psi
(2q)} \esp\bigl[ \tilde\delta_{j,k,n,H}^{2q}
\bigr] = m_H(2q) . \label{eqlim-mh2q}
\end{equation}
By $2$-dependence with respect to $j$, Jensen's inequality, (\ref{eqpsi-pq})
applied to $2q<q_\chi$ and the bound~(\ref{eqappcota164}), we obtain,
some $\eta>0$,
\begin{eqnarray*}
\esp \Biggl[ \Biggl\llvert 2^{n\tau_H(2q)} L^{-1} \sum
_{j=0}^{L-1} \sum_{k=0}^{2^n-1}
\bigl(a_{j,k,n,H}^{2q}-\tilde a_{j,k,n,H}^{2q}
\bigr) \Biggr\rrvert^p \Biggr] & \leq& C L^{1-p}
2^{np\zeta(2q)} \esp \bigl[ \bigl\llvert a_{j,k,n,H}^{2q}-
\tilde a_{j,k,n,H}^{2q} \bigr\rrvert^p \bigr]
\\
& \leq& C L^{1-p} 2^{n\psi(2pq) - p\psi(2q)-\eta} \leq C 2^{n(p-1-\eta)} .
\end{eqnarray*}
Choosing $p-1<\eta$ and Borel--Cantelli's lemma yield that
%
\begin{equation}
2^{n\tau_H(2q)} L^{-1} \sum_{j=0}^{L-1}
\sum_{k=0}^{2^n-1} \bigl(a_{j,k,n,H}^{2q}-
\tilde a_{j,k,n,H}^{2q}\bigr) \to0 \qquad \mbox{a.s.} \label{eqa-atilde-ps}
\end{equation}
Recall the definition of $\tilde T_{n,2q}$ in~(\ref{eqdef-tilde-tnq})
and define further
\begin{eqnarray*}
\tilde S_{n,1} & =& 2^{n\tau_H(2q)} L^{-1} \sum
_{j=0}^{L-1} \sum_{k=0}^{2^n-1}
\tilde a_{j,k,n,H}^{2q} ,
\\
m_{n,H}(2q) & =& 2^{n\zeta_H(q)} \esp\bigl[\tilde
a_{0,0,n,H}^{2q}\bigr] = 2^{n\zeta_H(q)} \mathrm{e}^{\psi(q)}
l_n^{-\psi(q)} \esp\bigl[\tilde\delta_{0,0,n,H}^{2q}
\bigr] .
\end{eqnarray*}
We have already shown in the proof of Proposition~\ref
{propconv-as-SnMq} that
if $2q<q_\chi$, then $\tilde T_{n,2q} \to1$ a.s. Moreover, by the
argument of
$2$-dependence with respect to $j$, we have
\begin{eqnarray*}
\esp \biggl[ \biggl\llvert \frac{\tilde S_{n,1}}{m_{n,H}(2q)} - \tilde T_{n,2q} \biggr
\rrvert^p \biggr]  \leq C L^{1-p} \esp \Biggl[ \Biggl\llvert
2^{-n} \sum_{k=1}^n
\frac
{\mathrm
e^{2qw_{l_n}(k2^{-n})}}{\mathrm{e}^{\psi(2q)} l_n^{-\psi(2q)}} \biggl(\frac{\tilde
\delta_{0,k,n,H}^{2q}}{\esp[\tilde\delta_{0,k,n,H}^{2q}]} - 1 \biggr) \Biggr
\rrvert^p \Biggr] .
\end{eqnarray*}
As in the proof of Proposition~\ref{propconv-as-SnMq}, we now use the
fact that
$w_{l_n}$ is independent of the measure~$M_n$, the $2^nl_n$-dependence
of the
variables $\tilde\delta_{0,k,n,H}$ and~(\ref{eqlim-mh2q}) to obtain
\begin{eqnarray*}
\esp \Biggl[ \Biggl\llvert 2^{-n} \sum_{k=1}^n
\frac{\mathrm{e}^{2qw_{l_n}(k2^{-n})}} {
\mathrm{e}^{\psi(2q)} l_n^{-\psi(2q)}} \biggl(\frac{\tilde
\delta_{0,k,n,H}^{2q}} {\esp[\tilde\delta_{0,k,n,H}^{2q}]} - 1 \biggr) \Biggr
\rrvert^p \Biggr] & \leq &C l_n^{\varepsilon-\psi(2pq) + p\psi(2q)} \esp
\biggl[ \frac{\tilde\delta_{0,0,n,H}^{2pq}} {
(\esp[\tilde\delta_{0,0,n,H}^{2q}])^p} \biggr]
\\
& \leq& C l_n^{\varepsilon} 2^{n\{\psi(2pq)-p\psi(2q)\}} .
\end{eqnarray*}
Now, as in the proof of Proposition~\ref{propconv-as-SnMq}, we must choose
$\alpha$ small enough so that $2q<q_{\chi'}$, for $\chi'<\chi-\alpha$, and
$\varepsilon$ such that $2pq<q_{\chi'}$ with $p=1+\varepsilon$. Such a choice
and~(\ref{eqpsi-pq}) applied with $2pq<q_{\chi'}$ yield
\[
\esp \biggl[ \biggl\llvert \frac{\tilde S_{n,1}}{m_{n,H}(2q)} - \tilde T_{n,2q} \biggr
\rrvert^p \biggr]  \leq C 2^{-\varepsilon\chi} l_n^{\varepsilon}
2^{n\varepsilon(1+\chi
')} = C 2^{n \varepsilon(\chi'+\alpha-\chi)} .
\]
This last bound and Borel--Cantelli's lemma yield that
$m_{n,H}^{-1}(2q)\tilde
S_{n,1} - \tilde T_{n,2q} \to0$, a.s. This and~(\ref{eqa-atilde-ps})
finally prove
that $\tilde S_{n,1} \to m_H(2q)$ a.s.

In order to prove that $S_{n,2}\to0$ a.s., by stationarity and $2$-dependence
in $j$, it is enough to prove that, for $p=1+\varepsilon$,
%
\begin{eqnarray}
\esp \biggl[ \biggl\llvert 2^{n\tau_H(2q)} \sum_{0 \leq k\ne k' < 2^n}
\rho_{H,n}^r\bigl(0,0,k,k'\bigr)
a_{0,k,n,H}^{q} a_{0,k',n,H}^{q} \biggr
\rrvert^p \biggr] = \mathrm{O}\bigl(2^{(\varepsilon\chi-\eta) n}\bigr)
\label{eqappcota1789}
\end{eqnarray}
for some $\eta>0$ and apply Borel--Cantelli's lemma. Since all quantities
involved are nonnegative, we can use the bound~(\ref
{eqbound-cov-ludena}), and
thus it suffices to obtain a bound for
\[
\esp \biggl[ \biggl\llvert 2^{n\tau_H(2q)} \sum_{0 \leq k\ne k' < 2^n}
\bigl|k-k'\bigr|^{r(2H-2)} a_{0,k,n,H}^{q}
a_{0,k',n,H}^{q} \biggr\rrvert^p \biggr] .
\]
Define
\[
\tilde\delta_{k}^2  = \int_{\Delta_{n,k}}
\int_{\Delta_{n,k}} |u - v|^{2H-2} M_n(\mathrm{d}
u) M_n(\mathrm{d} v) .
\]
Then $\tilde a_{0,k,n,H} = \tilde\delta_{k} \mathrm{e}^{q
w_{l_n}(k2^{-n})}$ and
using the bound~(\ref{eqappcota162}) and~(\ref{eqpsi-pq}), we obtain
\[
\esp \biggl[ \biggl\llvert 2^{n\tau_H(2q)}\sum_{0 \leq k\ne k' < 2^n}
\bigl|k-k'\bigr|^{r(2H-2)} \bigl\{a_{0,k,n,H}^{q}
a_{0,k',n,H}^{q} - \tilde a_{0,k,n,H}^{q} \tilde
a_{0,k',n,H}^{q} \bigr\} \biggr\rrvert^p \biggr] = \mathrm{O}
\bigl(2^{(\varepsilon\chi-\eta) n}\bigr) .
\]
Thus, we need to obtain a bound for $\esp[S_{n,4}^p]$ where
\[
S_{n,4} = 2^{n\tau_H(2q)} \sum_{0 \leq k\ne k' < 2^n}
\bigl|k-k'\bigr|^{r(2H-2)} \tilde a_{0,k,n,H}^{q}
\tilde a_{0,k',n,H}^{q} ,
\]
which we further decompose as $S_{n,4}=S_{n,5}+S_{n,6}$ with
\begin{eqnarray*}
S_{n,5} & = &2^{n\tau_H(2q)} \sum_{0 \leq k\ne k' < 2^n}
\bigl|k-k'\bigr|^{r(2H-2)} \bigl\{\tilde\delta_k^q
\tilde\delta_{k'}^q -\esp\bigl[\tilde
\delta_k^q \tilde\delta_{k'}^q
\bigr]\bigr\} \mathrm {e}^{qw_{l_n}(k2^{-n})+qw_{l_n}(k'2^{-n})} ,
\\
S_{n,6} & =& 2^{n\tau_H(2q)} \sum_{0 \leq k\ne k' < 2^n}
\bigl|k-k'\bigr|^{r(2H-2)} \esp\bigl[\tilde\delta_k^q
\tilde\delta_{k'}^q\bigr] \mathrm {e}^{qw_{l_n}(k2^{-n})+qw_{l_n}(k'2^{-n})} .
\end{eqnarray*}
Since $H<3/4$ and $r\geq2$, we have that $r(2H-2)<-1$ and the series
$\sum k^{r(2H-2)}$ is summable. Thus, applying Cauchy--Schwarz' inequality yields
\begin{eqnarray*}
\esp \biggl[ \biggl| 2^{-n} \sum_{0 \leq k\ne k' < 2^n}
\bigl|k-k'\bigr|^{r(2H-2)} \mathrm{e}^{qw_{l_n}(k2^{-n})+qw_{l_n}(k'2^{-n})}
\biggr|^p \biggr] \leq C \esp \Biggl[ \Biggl\llvert 2^{-n}
\sum_{k=0}^{2^n-1} \mathrm{e}^{qw_{l_n}(k2^{-n})}
\Biggr\rrvert^{2p} \Biggr] .
\end{eqnarray*}
Next, applying Lemma~\ref{lemappmerida} with $p$ such that $2pq<q_\chi$
and $\varepsilon'<p-1$ yields
%
\begin{eqnarray}
\esp \biggl[ \biggl\llvert 2^{-n} \sum_{0 \leq k\ne k' < 2^n}
\bigl|k-k'\bigr|^{r(2H-2)} \mathrm{e}^{qw_{l_n}(k2^{-n})+qw_{l_n}(k'2^{-n})} \biggr
\rrvert^p \biggr] \leq C l_n^{-\{\psi(2pq)-\varepsilon'\}} .
\label{eqcota1515-corrige}
\end{eqnarray}
By (\ref{eqappcota1492}), it holds that $\esp[\tilde\delta_k^q\tilde
\delta_{k'}^q]
\sim C(k,k') l_n^{\psi(2q)} 2^{-n\zeta_H(q)}$ where $C(k,k')$ is uniformly
bounded, thus
\[
\esp\bigl[S_{n,6}^p\bigr] \leq C
l_n^{-\{\psi(2pq)-p\psi(2q)-\varepsilon'\}}.
\]
If $2pq<q_\chi$, applying~(\ref{eqpsi-pq}), we have
\begin{eqnarray*}
(1-\alpha) \bigl\{\psi(2pq)-p\psi(2q) -\varepsilon'\bigr\} -\varepsilon
\chi\leq (1-\alpha) \varepsilon(1+\chi) - \varepsilon' \leq \varepsilon-
\varepsilon' - \alpha\varepsilon(1+\chi),
\end{eqnarray*}
which can be made negative by choosing $\varepsilon'$ close enough to
$\varepsilon$.
To deal with the last term, as in the proof of
Proposition~\ref{propconv-as-SnMq} we use the conditional $2^{\alpha n}$
dependence of the random variables $\delta_k$. We obtain the bound
\[
\esp\bigl[S_{n,5}^p\bigr] \leq C 2^{n\{\psi(2pq)-p\psi(2q)-\varepsilon\}} = \mathrm{O}
\bigl(2^{n(\varepsilon\chi-\eta)}\bigr)
\]
for small some $\eta>0$. We have proved~(\ref{eqappcota1789}), and
thus~(\ref{eqratio}) holds.
We can now define
\[
\Gamma_1(q) = \sum_{r=2}^\infty
\frac{g_r(q)^2}{r!} \Gamma(r,q) .
\]
As $\sum_{r=2}^\infty(r!)^{-1} g_r(q)^2 <\infty$ and $\Gamma_n(r,q)\le
\Gamma_n(2,q)$, then by the bounded convergence theorem, the previous
series is
convergent and thus we have obtained that
\begin{eqnarray*}
L^{-1} 2^{n\tau_H(2q)} \operatorname{var}_M
\bigl(S_{L,n}(X,q)\bigr) \to\Gamma_1(q) \qquad \mbox{a.s.}
\end{eqnarray*}
This also yield that there exists a constant $\Gamma_2(q)$ such that
\[
L^{-1} 2^{n\tau_H(2q)} \operatorname{var}_M
\bigl(2^{\tau_H(q)}S_{L,n+1}(X,q)\bigr) \to \Gamma_2(q) \qquad
\mbox{a.s.}
\]
By similar techniques, we also obtain that there exists a constant
$\Gamma_3(q)$
such that
\[
L^{-1} 2^{n\tau_H(2q)} \operatorname{cov}_M
\bigl(S_{L,n}(X,q),S_{L,n+1}(X,q)\bigr) \to
\Gamma_3(q) \qquad\mbox{a.s.}
\]
Finally, since $T_n(X,q)= S_{L,n}(X,q) - 2^{\tau_H(q)} S_{L,n+1}(X,q)$,
the last
three convergences yield~(\ref{eqconv-var-TLnXq-H}).

\begin{pf*}{Proof of \eqref{eqconv-weak-M-H-2}}
By Nualart and Peccati \cite{nualartpecatti2005}, Theorem~1, the proof will follow by
checking that
%
\begin{eqnarray}
\label{eqconv-weak-M-H-1-p1} L^{-2} 2^{2n\tau_H(2q)}
\esp_{M}\bigl[ \bigl\{T_n(X,q) - \esp_M
\bigl[T_n(X,q)\bigr]\bigr\}^4\bigr] \to3
\Gamma(H,q)^2 \qquad\mbox{a.s. }
\end{eqnarray}
Define
\begin{eqnarray*}
&&T_{n,r} (X,q)
\\
&&\quad = \sum_{j=0}^{L-1} \sum
_{k=0}^{2^n-1} 2^{\tau_H(q)} \bigl
\{a_{j,2k,n+1,H}^q H_r(Y_{j,2k,n+1}) +
a_{j,2k+1,n+1,H}^q H_r(Y_{j,2k+1,n+1})\bigr\}\\
&&\hspace*{44pt}\quad{} -
a_{j,k,n,H}^q H_r(Y_{j,k,n}) .
\end{eqnarray*}
Then, from the definition of $T_n(X,q)$ and recalling the expansion
$G_q =
\sum_{r=2}^\infty\frac{g_r(q)}{r!} H_r$ in terms of the Hermite
polynomials, to
show (\ref{eqconv-weak-M-H-1-p1}) it is enough to check that
%
\begin{eqnarray}
\label{eqconv-weak-M-H-1-p2} \esp_M \bigl[\bigl(T_{n,r}(X,q)
\bigr)^4\bigr] = \frac{3}{(r!)^2} \esp_M^2
\bigl[T_{n,r}^2(X,q)\bigr] + R_n(q,r) ,
\end{eqnarray}
with $L^{-2} 2^{2n\tau_H(2q)}R_n(q,r) \to0$ a.s.
In order to calculate the fourth order moment in~(\ref{eqconv-weak-M-H-1-p2})
we use a standard application of the Diagram formula, for which we use the
notation in Surgailis \cite{surgailis}. Given a centered stationary Gaussian process
$\{X_j\}_{j\ge1}$ with positive covariance
$c(t_i,t_j)=\operatorname{cov}(X_{t_i},X_{t_j})$ and variance one, and a triangular
array of positive elements $\{b_t\}_{t=1}^N$ define $S_N(b):=\sum_{t=1}^N b_t
H_r(X_t)$. We introduce the following basic lattice notation. Let $W$
be a 4~row
table, whose rows correspond to the size $r$ vectors $W_i=(i,\ldots,i) ,
i=1,\ldots, 4$. Consider the collection $\Gamma$ of Gaussian flat connected
diagrams $\gamma$, that is, of partitions of $W$ defined by the
disjoint subsets
$\{V_\ell\}$ with $W=\bigcup_\ell V_\ell$, such that, respectively,
$|V_\ell|=2$,
no $V_\ell\subset W_i$ and it is not possible to write $W=W_1\cup W_2$, where
$W_1$ and $W_2$ can be partitioned by the diagram separately. Then, we have
that (see, e.g., Surgailis \cite{surgailis})
%
\begin{eqnarray}
\label{eqconv-weak-M-H-1-p3} \esp\bigl[\bigl(S_N(b)
\bigr)^4\bigr] &=& 3 \Biggl( \sum_{t_1,t_2=1}^N
b_{t_1} b_{t_2} c^{r}(t_1,t_2)
\Biggr)^2
\nonumber
\\[-8pt]
\\[-8pt]
\nonumber
&&{}+ \sum_{\gamma\in\Gamma} \sum
_{t_1,\ldots,t_4} b_{t_1}\cdots b_{t_4} \prod
_{1 \le i<j \le4} c^{l_{i,j}}(t_i,t_j)
,\qquad
\end{eqnarray}
where $l_{i,j}$ is the number of elements $V_\ell$ in the diagram that
pair row
$i$ with row $j$. Because the diagram is connected and each row must
appear at
least once, for each pair $i,j$ we have $1\le l_{i,j}<r$. Also, the
fact that
the diagrams in $\Gamma$ are flat (i.e., that no $V_\ell\subset W_i$) assures
that the second sum is over 4-tuples of pairwise distinct indices. On
the other
hand, since $0 \leq c(i,j) \leq1$ and $r\ge2$, for each $\gamma\in
\Gamma$,
by symmetry
%
\begin{eqnarray}\label{eqby-symmetry}
&&\sum_{t_1,\ldots,t_4} b_{t_1}\cdots
b_{t_4}\prod_{1\le i<j\le4} c^{l_{i,j}}(t_i,t_j)
\nonumber
\\[-8pt]
\\[-8pt]
\nonumber
&&\quad
\le\sum_{t_1,\ldots,t_4} b_{t_1}\cdots
b_{t_4} c(t_1,t_2)c(t_2,t_3)
c(t_3,t_4) c(t_4,t_1) .
\end{eqnarray}
Applying (\ref{eqconv-weak-M-H-1-p3}) and~(\ref{eqby-symmetry}) to
$T_{n,r}(X,q)$, we obtain~(\ref{eqconv-weak-M-H-1-p2}) if we show that
%
\begin{eqnarray}
\label{eqconv-weak-M-H-1-p6} && L^{-2} 2^{2n\tau_H(2q)} \sum
_{j_1,\ldots,j_4=1}^{L-1} \sum_{k_1,\ldots,k_4=1}^{2^n-1}
\prod_{1\le i\le4} a_{j_i,k_i,n,H}^q
\rho_{H,n}(j_1,j_2,k_1,k_2)
\rho_{H,n}(j_2,j_3,k_2,k_3)\qquad
\nonumber\\
&&\hphantom{L^{-2} 2^{2n\tau_H(2q)} \sum
_{j_1,\ldots,j_4=1}^{L-1} \sum_{k_1,\ldots,k_4=1}^{2^n-1}
\prod_{1\le i\le4}}{}\times\rho_{H,n}(j_3,j_4,k_3,k_4)\\
&&\hphantom{L^{-2} 2^{2n\tau_H(2q)} \sum
_{j_1,\ldots,j_4=1}^{L-1} \sum_{k_1,\ldots,k_4=1}^{2^n-1}
\prod_{1\le i\le4}}{}\times\rho_{H,n}(j_1,j_2,k_1,k_4)
\to0 \qquad \mbox{a.s.}\nonumber
\end{eqnarray}
The fact that the sum is over pairwise distinct indices assures that
$(j_i,k_i)\ne(j_\ell,k_\ell)$ for $i\ne\ell$, however it is necessary to
distinguish several cases:
\begin{itemize}
\item Case $j_i\equiv j$ for all $i=1,\ldots, 4$. We prove that
%
\begin{eqnarray}\label{eqconv-weak-M-H-1-p7}
&&L^{-2} 2^{2n\tau_H(2q)}\sum_{j=1}^{L-1}
\sum_{k_1,\ldots,k_4=1}^{2^n-1} \prod
_{1\le i\le4} a_{j,n,k_i,H}^q \rho_{H,n}(j_1,j_2,k_1,k_2)
\nonumber
\\
&&\hphantom{L^{-2} 2^{2n\tau_H(2q)}\sum_{j=1}^{L-1}
\sum_{k_1,\ldots,k_4=1}^{2^n-1} \prod
_{1\le i\le4}}{}\times \rho_{H,n}(j,j,k_2,k_3)
\rho_{H,n}(j,j,k_3,k_4)\\
&&\hphantom{L^{-2} 2^{2n\tau_H(2q)}\sum_{j=1}^{L-1}
\sum_{k_1,\ldots,k_4=1}^{2^n-1} \prod
_{1\le i\le4}}{}\times\rho_{H,n}(j,j,k_1,k_4)
\to0 \qquad\mbox{a.s.}\nonumber
\end{eqnarray}
This will be achieved by showing that the expectation of the l.h.s. of
(\ref{eqconv-weak-M-H-1-p7}) tends to zero. By stationarity of increments
and H\"older's inequality, we have
\begin{eqnarray*}
\esp \biggl[ \prod_{1\le i\le4} a_{0,n,k_i,H}^q
\biggr] \leq \esp^{1/2} \bigl[ a_{0,0,n,H}^{2q}
a_{0,k_2-k_1+1,n,H}^{2q} \bigr] \esp^{1/2} \bigl[
a_{0,0,n,H}^{2q} a_{0,k_4-k_3+1,n,H}^{2q} \bigr] .
\end{eqnarray*}
In addition, by the scaling property, we have that
\begin{eqnarray*}
&&\esp \bigl[ a_{0,0,n,H}^{2q} a_{0,k_2-k_1+1,n,H}^{2q}
\bigr] \\
&&\quad= 2^{-n\zeta_H(4q)} (k_2-k_1+1)^{\zeta_H(4q)-2\zeta_H(2q)}
C(k_1,k_2) ,
\end{eqnarray*}
with $C(k_1,k_2) \leq m_H(4q)$. This and the deterministic bounds on the
covariance (\ref{eqbound-cov-ludena}) yield that the expectation of the
l.h.s. of (\ref{eqconv-weak-M-H-1-p7}) is bounded by
\begin{eqnarray*}
&& L^{-1} 2^{-2n} 2^{n\{\psi(4q)-2\psi(q)\}} \sum
_{k_1,\ldots,k_4=0}^{2^n-1} |k_1-k_2|^{2H-2-(\psi(4q)-2\psi(2q))/2}
\\
&&\hspace*{114pt}\qquad{}  \times|k_3-k_4|^{2H-2-(\psi(4q)-2\psi(2q))/2}
|k_2-k_3|^{2H-2} \\
&&\hspace*{114pt}\qquad{}  \times|k_1-k_4|^{2H-2}
\\
&&\quad \leq C L^{-1} 2^{-2n} 2^{n\{\psi(4q)-2\psi(q)\}} \sum
_{k_1,k_2=0}^{2^n-1} |k_1-k_2|^{2(2H-2)}
\Biggl( \sum_{k=0}^{2^n-1} k^{2H-2-(\psi(4q)-2\psi(2q))/2}
\Biggr)^2
\\
&&\quad \leq C L^{-1} 2^{-n} \sum_{k}^{2^n-1}
k^{2(2H-2)} \Biggl( 2^{n\{\psi(4q)-2\psi(q)\}/2}\sum_{k=0}^{2^n-1}
k^{2H-2-(\psi(4q)-2\psi(2q))/2} \Biggr)^2 .
\end{eqnarray*}
Since $H>3/4$, the first series is summable, and since $\psi(4q)-2\psi(2q)>0$,
the second one is of order $n 2^{n(\{\psi(4q)-2\psi(2q)\}\vee
(4H-2))/2}$ (where
the factor $n$ only arises if the two exponents are equal). Recalling that
$\psi(4q)-2\psi(q)<1=\chi$ yields~(\ref{eqconv-weak-M-H-1-p7}).
\item Case $j_1=j_2=j_3=j$. In this case $|k_i-k_4|=\mathrm{O}(2^{-n(2H-2)})$, $i=1,2,3$
and by H\"older's inequality and independence of $a_{j',k_4,n,H}$ and
$\prod_{1\le i\le3} a_{j,k_i,n,H}$ we
have
\[
\esp \biggl[ a_{j',k_4,n,H}^q \prod_{1\le i\le3}
a_{j,k_i,n,H}^q \biggr] = \mathrm{O}\bigl(2^{-n\zeta(4q)/2}
2^{-n\zeta(2q)/2}2^{-n\zeta(q)}\bigr) |k_2-k_3|^{(\psi(4q)-2\psi(2q))/2}
.
\]
Using again the bound~(\ref{eqbound-cov-ludena}), we obtain
%
\begin{eqnarray}\label{eqconv-weak-M-H-1-p9}
&&
L^{-2}  2^{2n\tau_H(2q)} \sum
_{j=0}^{L-1} \sum_{j'=0}^{L-1}
\sum_{k_1,\ldots,k_4=1}^{2^n-1}
\esp \biggl[ \prod_{1\le i\le3} a_{j,k_i,n,H}^q
a_{j',k_4,n,H}^q \rho_{H,n}^2
\bigl(j,j',k_1,k_4\bigr)
\nonumber\\
&&\hspace*{139pt}\qquad{}\times\rho_{H,n}(j,j,k_2,k_3) \rho_{H,n}(j,j,k_3,k_1)
\biggr]\qquad
\\
 &&\quad = \mathrm{O}\bigl(L^{-1}2^{n(4H-3)}2^{-n(\psi(2q)/2-\psi(q)) }
\bigr) .\nonumber
\end{eqnarray}
As before, $2^{n(4H-3)}\to0$ under $H<3/4$ and $\psi(2q)/2-\psi(q)>0$ by
convexity of function~$\psi$.
\item Case $j_1=j_2$ and $j_3=j_4$. The bound for the expectation of the
l.h.s. of~(\ref{eqconv-weak-M-H-1-p6}) is then
%
\begin{eqnarray}
\label{eqconv-weak-M-H-1-p10}
&& L^{-2} 2^{2n\tau_H(2q)} \sum
_{j,j,j',j'}^{L-1} \sum_{k_1,\ldots,k_4=1}^{2^n-1}
\esp\bigl[ a_{j,k_1,n,H}^q a_{j,k_2,n,H}^q
a_{j',k_3,n,H}^q a_{j',k_4,n,H}^q
\nonumber\\
&&\hspace*{119pt}\quad{}\times\rho_{H,n}^2\bigl(j,j',k_1,k_4
\bigr)\nonumber
\\[-8pt]
\\[-8pt]
\nonumber
&&\hspace*{119pt}\quad{}\times \rho_{H,n}(j,j,k_1,k_2)
\rho_{H,n}\bigl(j',j',k_3,k_4
\bigr) \bigr] \\
&&\quad\le C 2^{n(4H-3)} ,\nonumber
\end{eqnarray}
by independence of $a_{j,n,k_1,H}^q$ and $a_{j',n,k_2,H}^q$ whenever
$j\ne j'$.\vspace*{1pt}
\item Case all $j_i$ are different. The bound is then
%
\begin{eqnarray}
\label{eqconv-weak-M-H-1-p11}
&&L^{-2} 2^{2n\tau_H(2q)} \sum
_{j_1,j_2,j_3,j_4}^{L-1} \sum_{k_1,\ldots,k_4=1}^{2^n-1}
\esp \biggl[ \prod_{1\le i\le4} a_{j_i,n,k_i,H}^q
\rho_{H,n}^2(j_1,j_2,k_1,k_4)
\nonumber
\\
&&\hspace*{152pt}\quad{}\times \rho_{H,n}(j_2,j_3,k_2,k_3)
\rho_{H,n}(j_3,j_4,k_3,k_4)
\biggr]\qquad\quad \\
&&\quad\le C 2^{n(-2\psi(2q)+4\psi(q))} 2^{n(2+\chi)(4H-3)} .\nonumber
\end{eqnarray}
As before, $2^{n(2+\chi)(4H-3)}\to0$ under $H<3/4$ and we use
$\psi(2q)>2\psi(q)$.
\end{itemize}

The proof follows by gathering (\ref{eqconv-weak-M-H-1-p7}),
(\ref{eqconv-weak-M-H-1-p9}), (\ref{eqconv-weak-M-H-1-p10}) and
(\ref{eqconv-weak-M-H-1-p11}).
\end{pf*}

\begin{appendix}\label{app}

\section*{Appendix: Additional lemmas}

\textit{Bounds for infinitely divisible random measures}.
We now state some results using the properties of infinitely divisible random
measures. The infinitely divisible measure $P$ introduced in
Section~\ref{secmrm} can be decomposed as $P=P_0+P_1$ where $P_0$ and
$P_1$ are
independent and
\[
\esp\bigl[ \mathrm{e}^{qP_i(A)} \bigr] = \mathrm{e}^{\mu(A)\psi_i(q)} ,
\]
with
\begin{eqnarray*}
\psi_0(q) & =& \frac{\sigma^2}2 q^2 + m q + \int
_{-1}^\infty\bigl\{\mathrm{e}^{qx} - 1 - q
x \mathbf 1_{\{|x|\leq1\}}\bigr\} \nu(\mathrm{d} x) ,
\\
\psi_1(q) & = &\int_{-\infty}^{-1} \bigl\{
\mathrm{e}^{qx} - 1 \bigr\} \nu(\mathrm {d} x) .
\end{eqnarray*}
Note that by assumption, $\psi_0$ is infinitely differentiable on
$[0,\infty)$,
whereas $\psi_1$ is infinitely differentiable on $(0,\infty)$ only.
Then, for
$A$ such that $\mu(A)\leq1$, $q>0$ and $p\geq1$ such that $pq < q^*$,
it holds that\vspace*{1pt}
%
\setcounter{equation}{0}
\begin{eqnarray}
\esp\bigl[\bigl|P_0(A)\bigr|^p\bigr] & = &\mathrm{O}\bigl(\bigl[\mu(A)
\bigr]^{(p/2)\wedge1}\bigr) , \label {eqappbound-esp}
\\
\esp\bigl[\bigl|\mathrm{e}^{qP_0(A)} - 1 - qP_0(A)\bigr|^p
\bigr] & =& \mathrm{O}\bigl(\mu(A)\bigr) , \label {eqapppoisson}
\\
\esp\bigl[\bigl|\mathrm{e}^{qP_1(A)} - 1 \bigr|^p\bigr] & =& \mathrm{O}\bigl(
\mu(A)\bigr) . \label{eqappvariance-finie}
\end{eqnarray}
Indeed, since $0 \leq\mathrm{e}^x-1-x \leq x^2\mathrm{e}^{x_+} \leq
x^2(\mathrm
e^x +1)$, with $x_+=\max(x,0)$, we have\vspace*{1pt}
\begin{eqnarray*}
&&\esp \bigl[ \bigl|\mathrm{e}^{qP_0(A)} - 1 - qP_0(A)\bigr|^p
\bigr]\\
&&\qquad \leq C \esp \bigl[ P_0^{2p}(A)
\mathrm{e}^{pqP_0(A)} \bigr] + C \esp \bigl[P_0^{2p}(A)
\bigr] .
\end{eqnarray*}
Denote $L(s) = \esp[ \mathrm{e}^{sP(A)}] = \mathrm{e}^{\psi_0(s)} \mu
(A)$. The
function $L$ is infinitely differentiable on $[0,q^*)$ and $L^{(n)}(q) =
\mathrm{O}(\mu(A))$ for all $q\geq0$ and $n\geq1$. This yields~(\ref{eqappbound-esp})
by the Cauchy--Schwarz inequality. Let $n$ be an integer greater than
$p$. Then,
for $0 \leq q <q^*$, (\ref{eqappvariance-finie}) follows from the following
bound:\vspace*{1pt}
\begin{eqnarray*}
\esp \bigl[ P_0^{2p}(A) \mathrm{e}^{pqP_0(A)}
\bigr] &\leq&\esp \bigl[ P_0^{2}(A) \mathrm{e}^{pqP_0(A)}
\bigr]+ \esp \bigl[ P_0^{2n}(A) \mathrm{e}^{pqP_0(A)}
\bigr] \\
&=& L^{\prime\prime}(pq)+ L^{(2n)}(pq) .\vspace*{1pt}
\end{eqnarray*}
To prove~(\ref{eqapppoisson}), note that $P_1(A)$ is a coumpond Poisson
distribution with negative jumps, thus $P_1(A)<0$ for all $A$, and for all
$p\geq1$,\vspace*{1pt}
\[
\esp\bigl[\bigl|\mathrm{e}^{qP_1(A)} - 1 \bigr|^p\bigr]  \leq1 -
\mathrm{e}^{\psi_1(q)\mu
(A)} = \mathrm{O}\bigl(\mu(A)\bigr) .
\]
Further, write\vspace*{1pt}
%
\begin{eqnarray}\label{eqapptrick}
&&\mathrm{e}^{qP(A)} - 1 - qP_0(A)
\nonumber
\\[-8pt]
\\[-8pt]
\nonumber
&&\quad= \bigl\{
\mathrm{e}^{qP_1(A)} - 1\bigr\} \mathrm {e}^{qP_0(A)} +
\mathrm{e}^{qP_0(A)} - 1 - qP_0(A)  .
\end{eqnarray}
This decomposition, (\ref{eqapppoisson}), (\ref{eqappvariance-finie}) and
the independence of $P_0$ and $P_1$ yield, for $q>0$ and $p\geq1$,\vspace*{1pt}
%
\begin{equation}
\esp\bigl[\bigl|\mathrm{e}^{qP(A)} - 1 - qP_0(A)\bigr|^p
\bigr]  = \mathrm{O}\bigl(\mu(A)\bigr) . \label{eqappcombine}\vspace*{1pt}
\end{equation}
Since $P$, $P_0$ and $P_1$ are independently scattered, these
inequalities yield
martingale maximal inequalities. For $A$ such that $\mu(A)\leq1$, and for
$C_u$ an increasing sequence of measurable subsets of $A$, it holds that\vspace*{1pt}
%
\begin{eqnarray}
\label{eqappmax-P0} \esp\Bigl[ \sup_u \bigl|P_0(C_u)\bigr|^p
\Bigr] & =& \mathrm{O}\bigl( \mu^{(p/2)\vee1}(A) \bigr) ,\qquad p \geq1 ,
\\
\esp \Bigl[ \sup_u\bigl|\mathrm{e}^{qP(C_u)}-1\bigr|^p
\Bigr] & =& \mathrm{O}\bigl(\mu (A)^{(p/2)\vee1}\bigr) ,\qquad p \geq1 , \label{eqappmax-qgeq2}
\\
\esp \Bigl[ \sup_{u}\bigl|\mathrm{e}^{qP(C_u)}-1-qP_0(C_u)\bigr|^p
\Bigr] & = &\mathrm{O}\bigl(\mu(A)\bigr) , \qquad p \geq1 . \label{eqappideal-bound}
\end{eqnarray}

\textit{Approximation and covariance bounds for the MRM}.

\begin{lem}
\label{lemappmerida}
Let $\alpha=1/n_0$ for some arbitrary integer $n_0\ge2$. For all
$p>1$ such
that $\esp[\mathrm{e}^{pqw_l(0)}]<\infty$, for any $\varepsilon'\in
(0,p-1)$, there
exists a constant $C$ such that
%
\begin{eqnarray}
\label{eqappcota-integral} \esp \biggl[ \biggl( \int_0^1
\frac{\mathrm
e^{qw_{l_n}(u)}}{\esp[\mathrm{e}^{qw_{l_n}(0)}]} \,\mathrm{d} u \biggr)^p \biggr] &\leq &C
l_n^{-\{\psi(pq) -p\psi(q) - \varepsilon'\}},
\\
\esp \Biggl[ \Biggl( 2^{-n} \sum_{k=0}^{2^{n}-1}
\frac{\mathrm
e^{qw_{l_n}(k2^{-n})}}{\esp[\mathrm{e}^{qw_{l_n}(0)}]} \Biggr)^p \Biggr] &\leq& C l_n^{-\{\psi(pq) -p\psi(q) -
\varepsilon'\}}
. \label{eqappcota-sum}
\end{eqnarray}

\end{lem}

\begin{pf}
The choice of $\alpha$ implies that
$(1-\alpha)n_0=n_0-1$ is an integer. Denote $g_n(u) = \mathrm
e^{qw_{l_n}(u)}/\esp[\mathrm{e}^{qw_{l_n}(0)}]$. Fix some integer
$k_0$, and
define $n_1 = k_0n_0$. If $n_1<n$, then
%
\begin{eqnarray}
\label{eqappdecom-integrale} \int_0^1
g_n(u) \,\mathrm{d} u & = &\int_0^1
g_{n_1} (u) \,\mathrm{d} u + \int_0^1
\bigl\{g_n(u) - g_{n_1}(u) \bigr\} \,\mathrm{d} u
\nonumber
\\[-8pt]
\\[-8pt]
\nonumber
& = &\int_0^1 g_{n_1}(u) \,
\mathrm{d} u + \sum_{k=0}^{2^{(1-\alpha)n_1}-1} \int
_{\Delta_{k,(1-\alpha)n_1}} \bigl\{g_n(u) - g_{n_1}(u) \bigr
\} \,\mathrm{d} u .
\end{eqnarray}
We bound the first integral by applying Jensen's inequality:
%
\begin{equation}
\esp \biggl[ \biggl( \int_0^1
g_{n_1}(u) \,\mathrm{d} u \biggr)^p \biggr] \leq\esp
\bigl[g_{n_1}^p(0)\bigr] = 2^{(1-\alpha)n_1\{\psi(pq)-p\psi(q)\}
} .
\label{eqappborne-first-integral}
\end{equation}
Since $w_{l_{n_1}}$ is independent of $w_{l_{n}}-w_{l_{n_1}}$, we can write
\[
g_n(u) - g_{n_1}(u) = g_{n_1}(u) \biggl\{
\frac{\mathrm
e^{qw_{l_n}(u)-qw_{l_{n_1}}(u)}}{\esp[\mathrm
e^{qw_{l_n}(0)-qw_{l_{n_1}}(0)}]} - 1 \biggr\}.
\]
Thus we see that the integrals $\int_{\Delta_{j,n_1}}
\{g_n(u)-g_{n_1}(u)\} \,\mathrm{d} u$ are centered and 2-dependent
conditionally on $\mathcal{F}_{n_1}$ the sigma-field generated by
$\{w_{l_{n_1}}(u), u\in[0,1]\}$. Thus by
von Bahr and Esseen~\cite{vonbahresseen1965}, Theorem~2, there is a constant $C$ such that
\begin{eqnarray*}
&&\esp \Biggl[ \Biggl\llvert \sum_{k=0}^{2^{(1-\alpha)n_1}-1}
\int_{\Delta_{k,(1-\alpha)n_1}} \bigl\{g_n(u) - g_{n_1}(u)
\bigr\} \,\mathrm{d} u \Biggr\rrvert^p \Biggr]
\\
&&\quad\leq C 2^{(1-\alpha)n_1} \esp \biggl[ \biggl\llvert \int_{\Delta_{0,(1-\alpha)n_1}}
\bigl\{ g_n(u) - g_{n_1}(u) \bigr\} \,\mathrm{d} u \biggr
\rrvert^p \biggr]
\\
&&\quad \leq C 2^{p-1} 2^{(1-\alpha)n_1} \esp \biggl[ \biggl\llvert \int
_{\Delta_{0,(1-\alpha)n_1}} g_n(u) \,\mathrm{d} u \biggr
\rrvert^p \biggr] + C 2^{p-1} 2^{(1-\alpha)n_1} \esp \biggl[
\biggl\llvert \int_{\Delta
_{0,(1-\alpha)n_1}} g_{n_1}(u) \,\mathrm{d} u
\biggr\rrvert^p \biggr]
\\
&&\quad \leq C 2^{p-1} 2^{(1-\alpha)n_1} \esp \biggl[ \biggl( \int
_{\Delta
_{0,(1-\alpha)n_1}} g_n(u) \,\mathrm{d} u \biggr)^p
\biggr] + C 2^{p-1} 2^{\{1-p+\psi(pq)-p\psi(q)\}(1-\alpha)n_1} .
\end{eqnarray*}
Since $l_n/l_{n_1} = l_{n-n_1}$, by the scaling property~(\ref
{eqscaling}), we have
\begin{eqnarray*}
\int_{\Delta_{0,(1-\alpha)n_1}} \mathrm{e}^{qw_{l_n}(u)} \,\mathrm{d} u =
l_{n_1} \int_0^{1}
\mathrm{e}^{qw_{l_{n-n_1}l_{n_1}}(l_{n_1}u)} \,\mathrm{d} u \stackrel{\mathrm{law}}= l_{n_1}
\mathrm{e}^{q\Omega_{l_{n_1}}} \int_0^1 \mathrm
e^{qw_{l_{n-n_1}}(u)} \,\mathrm{d} u .
\end{eqnarray*}
Thus,
\begin{eqnarray*}
\esp \biggl[ \biggl( \int_{\Delta_{0,(1-\alpha)n_1}} g_n(u) \,
\mathrm{d} u \biggr)^p \biggr] & =& 2^{(1-\alpha)n_1(\psi(pq)-p)}
\frac{(\esp[\mathrm
e^{qw_{l_{n-n_1}}(0)}])^p}{(\esp[\mathrm{e}^{qw_{l_{n}}(0)}])^p} \esp \biggl[ \biggl( \int_0^1
g_{n-n_1}(u) \,\mathrm{d} u \biggr)^p \biggr]
\\
& =& 2^{(1-\alpha)n_1(\psi(pq)-p\psi(q)-p)} \esp \biggl[ \biggl( \int_0^1
g_{n-n_1}(u) \,\mathrm{d} u \biggr)^p \biggr] .
\end{eqnarray*}
Thus we have obtained
%
\begin{eqnarray}
\label{eqappborne-second-integral}
&& \esp \Biggl[ \Biggl\llvert \sum
_{k=0}^{2^{(1-\alpha)n_1}-1} \int_{\Delta_{k,(1-\alpha)n_1}} \bigl
\{g_n(u) - g_{n_1}(u) \bigr\} \,\mathrm{d} u \Biggr
\rrvert^p \Biggr]
\nonumber
\\[-8pt]
\\[-8pt]
\nonumber
&&\quad\leq C 2^{(1-\alpha)n_1(\psi(pq)-p\psi(q)-p)} \esp \biggl[ \biggl( \int_0^1
g_{n-n_1}(u) \,\mathrm{d} u \biggr)^p \biggr] .
\end{eqnarray}
Denote $u_n = \esp [ ( \int_0^1 g_n(u) \,\mathrm{d} u  )^p
]$. Gathering~(\ref{eqappdecom-integrale}),~(\ref
{eqappborne-first-integral})
and~(\ref{eqappborne-second-integral}), we obtain the following recurrence:
\[
u_n \leq B + C2^{(1-\alpha)n_1(1-p+\psi(pq)-p\psi(q))} u_{n-n_1} .
\]
By choosing $k_0$ large enough, this yields that for any
$\varepsilon'\in(0,\varepsilon)$,
\[
u_n \leq B + 2^{(1-\alpha)n_1(\psi(pq)-p\psi(q)-\varepsilon')} u_{n-n_1} .
\]
Thus, there exists a constant $D$ such that
\[
u_n \leq D 2^{(1-\alpha)n(\psi(pq)-p\psi(q)-\varepsilon')} .
\]
This proves~(\ref{eqappcota-integral}). The bound~(\ref{eqappcota-sum})
follows by replacing the measure $\mathrm{d} u$ with a discrete
measure.
\end{pf}

\begin{lem}
\label{lemappapprox-MnDeltan}
Let $0<\alpha<1$ and $l_n=2^{-(1-\alpha)n}$. For $p\ge1$ and $q> 0$
such that $pq<q_\chi$, there exists a positive constant
$C$ such that
%
\begin{eqnarray}
\lim_{n\to\infty} 2^{n\zeta(q)} \mathrm{e}^{\psi(q)}
l_n^{-\psi(q)} \esp\bigl[M_n^q(
\Delta_{0,n})\bigr] &=& m(q) , \label{eqappmndeltan}
\\
\esp \bigl[ \bigl\llvert \mathrm{e}^{qw_{l_n}(0)} M_n^q(
\Delta_{0,n}) - M^q(\Delta_{0,n}) \bigr
\rrvert^p \bigr] &\leq&  C 2^{-\alpha(q\vee1)n/2} 2^{-n\zeta(pq)} .
\label{eqappcota16}
\end{eqnarray}
\end{lem}
\begin{pf}
Note that~(\ref{eqappcota16}) implies~(\ref{eqappmndeltan}). So we
only need to
prove~(\ref{eqappcota16}). Define the sets $I_n$, $B_n(u)$, $u\in
[0,2^{-n}]$ by
\begin{eqnarray*}
I_n = \bigcap_{0\leq u \leq2^{-n}}
A_{l_n}(u) = A_{l_n(0)} \cap A_{l_n(2^{-n})} ,\qquad
B_n(u) = A_{l_n}(u) \setminus I_n .
\end{eqnarray*}
See Figure~\ref{figappIBC} for an illustration. By definition of the
function $\psi$ and the measure $\mu$, we have, $\esp[\mathrm
e^{qP(I_n)}] = \mathrm{e}^{\psi(q) \mu(I_n)}$ and
\begin{eqnarray*}
\mu(I_n) & = &\int_{I_n} \frac{\mathrm{d} s \,\mathrm{d} t}{t^2} =
\int_{l_n}^1 \frac{t-2^{-n}}{t^{2}} \,\mathrm{d} t +
\int_1^\infty \frac{1-2^{-n}}{t^{2}} \,\mathrm{d} t
\\
& =& -\log(l_n) -2^{-n}\bigl(l_n^{-1}
- 1\bigr) + 1 - 2^{-n} = 1 - \log(l_n) -2^{-\alpha n} =
\mu\bigl(A_{l_n}(0)\bigr) - 2^{-\alpha n} .
\end{eqnarray*}
This yields $w_{l_n}(u) = P(I_n) + P(B_n(u))$ where the two summands are
independent and
%
\begin{equation}
\esp\bigl[\mathrm{e}^{qP(I_n)}\bigr] = \esp\bigl[ \mathrm{e}^{qw_{l_n}(0)}
\bigr] \bigl\{1 + \mathrm{O}\bigl(2^{-\alpha n}\bigr) \bigr\} . \label{eqappwln-In}
\end{equation}
Write further
\begin{eqnarray*}
M(\Delta_{0,n}) & =& \int_0^{2^{-n}}
\mathrm{e}^{w_{l_n}(u)} M_n(\mathrm {d} u) = \mathrm{e}^{P(I_n)}
\int_0^{2^{-n}} \mathrm{e}^{P(B_n(u))}
M_n(\mathrm{d} u)
\\
& = &\xi_n \int_0^{2^{-n}}
\mathrm{e}^{P(B_n(u))} \bar M_n(\mathrm{d} u) ,
\end{eqnarray*}
with $\xi_n = \mathrm{e}^{P(I_n)} M_n(\Delta_{0,n}) $ and $\bar
M_n(\mathrm{d}
u) = M_n(\mathrm{d} u)/M_n(\Delta_{0,n})$ is a random probability measure
on~$\Delta_{0,n}$. We thus obtain
\begin{eqnarray*}
M^q(\Delta_{0,n}) - \mathrm{e}^{qw_{l_n}(0)}
M_n^q(\Delta_{0,n}) = \xi_n^q
\biggl\{ \biggl( \int_0^{2^{-n}}
\mathrm{e}^{P(B_n(u))} \bar M_n(\mathrm{d}u) \biggr)^q
- \mathrm{e}^{qP(B_n(0))} \biggr\} .
\end{eqnarray*}
Noting that for $x >-1$ and $q>0$, it holds that $0 \leq|1-(1+x)^q|
\leq C_q
(|x|+|x|^q)$ and since $P(I_n)$, $M_n(\Delta_{0,n})$ and $P(B_n(u))$,
$0\leq u
\leq2^{-n}$, are mutually independent, we have
\begin{eqnarray*}
&&\esp \biggl[ \xi_n^{pq} \biggl\llvert \biggl(\int
_0^{2^{-n}} \mathrm{e}^{P(B_n(u))}
\bar{M}_n(\mathrm{d} u) \biggr)^q - \mathrm{e}^{qP(B_n(0))}
\biggr\rrvert^p \biggr]
\\
&&\quad\leq C \esp\bigl[\xi_n^{pq}\bigr] \Bigl\{ \Bigl(\esp
\Bigl[ \sup_{0\leq u \leq2^{-n}} \bigl\llvert \mathrm{e}^{P(B_n(u))} - 1 \bigr
\rrvert^{p(q\vee1)} \Bigr] \Bigr)^{q\wedge1} + \esp \Bigl[
\sup_{0\leq u \leq2^{-n}} \bigl\llvert \mathrm e^{P(B_n(u))} - 1 \bigr
\rrvert^{p} \Bigr] \Bigr\} .
\end{eqnarray*}
Thus, applying~(\ref{eqappmax-qgeq2}) yields
\[
\esp \bigl[\bigl\llvert M^q(\Delta_{0,n}) -
\mathrm{e}^{qw_{l_n}(0)} M_n^q(\Delta_{0,n})
\bigr\rrvert^p \bigr]  = \mathrm{O}\bigl(2^{-\alpha n(q\wedge1)/2}\bigr) \esp\bigl[
\xi_n^{pq}\bigr] .
\]
\upqed\end{pf}

\renewcommand{\thefigure}{\arabic{figure}}
\setcounter{figure}{1}
\begin{figure}[t]\vspace*{21pt}
\centering\setlength{\unitlength}{1mm}
\begin{picture}(100,80)(-55,-48)

\multiput(-60,-20)(5,0){20}{\line(1,0){3}}
\put(-60,-40){\line(1,0){100}}
\multiput(-60,5)(5,0){20}{{\line(1,0){3}}}
\put(45,-22){$l_n$}
\put(45,5){$1$}
\put(45,-41){$0$}

\put(-40,15){$B_n(u)$}
\put(-10,15){$I_n$}
\put(10,15){$B_n(u)$}

\put(-21,-44){$ 0$}
\put(-11,-44){$u$}
\put(-2,-45){${2^{-n}}$}

\put(-54,35){${-\frac12}$}
\put(-45,35){$u-\frac12$}
\put(-34,35){${\frac1{2^n}-\frac12}$}
\put(9,35){${\frac12}$}
\put(15,35){$\frac12+u$}
\put(26,35){${\frac12+\frac1{2^n}}$}

\multiput(-20,-41)(0,5){6}{{\line(0,1){3}}}
\multiput(-10,-41)(0,5){6}{{\line(0,1){3}}}
\multiput(0,-41)(0,5){6}{{\line(0,1){3}}}

\thicklines
\put(-33,-20){{\line(1,0){25}}}
\put(-50,5){{\line(0,1){25}}}
\put(-33,-20){{\line(-2,3){17}}}
\put(9,5){{\line(0,1){25}}}
\put(-8,-20){{\line(2,3){17}}}

\put(-23,-20){\line(1,0){25}}
\put(-40,5){\line(0,1){25}}
\put(-23,-20){\line(-2,3){17}}
\put(20,5){\line(0,1){25}}
\put(3,-20){\line(2,3){17}}

\put(-13,-20){{\line(1,0){26}}}
\put(-30,5){{\line(0,1){25}}}
\put(-13,-20){{\line(-2,3){17}}}
\put(30,5){{\line(0,1){25}}}
\put(13,-20){{\line(2,3){17}}}

\end{picture}
\caption{The sets $I_n$ and $B_n(u)$.}
\label{figappIBC}
\end{figure}
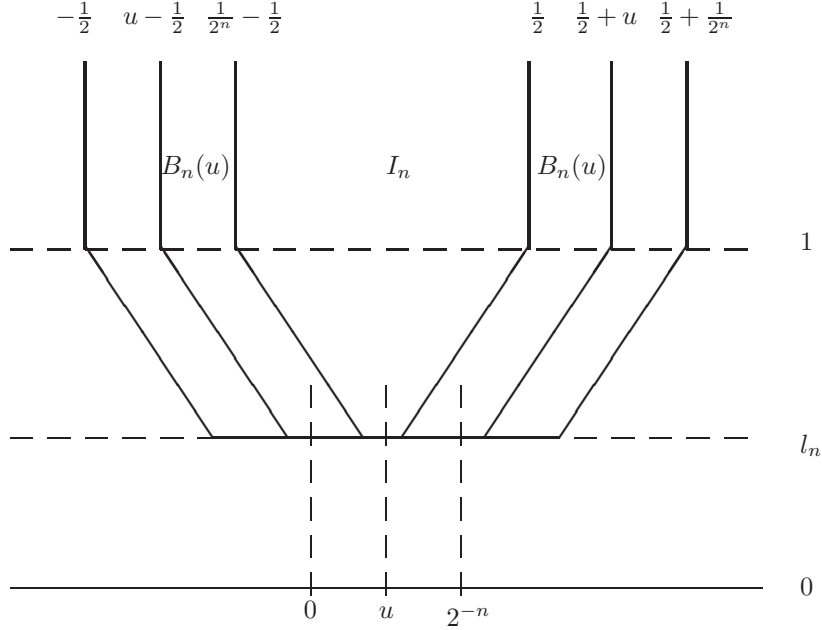

\begin{figure}[t] \centering\setlength{\unitlength} {0.9mm} %
\vspace*{-20pt}
\begin{picture}(115,95) (-50,-45) 
\put(-65,-40){\line(1,0){125}}
\multiput(-65,-18) (5,0){25} {\line(1,0){3}} \multiput(-65,5) (5,0){25} {
\line(1,0){3}} \multiput(-30,-41) (0,5){6} {{\line(0,1){3}}} \multiput(-20,-41)
(0,5){6} {{\line(0,1){3}}} \multiput(10,-41) (0,5){6} {{\line(0,1){3}}}
\multiput(20,-41) (0,5){6} {{\line(0,1){3}}} \put(62,4){$1$} \put(62,-19){$l=1-s-t$}
\put(62,-42){$0$} \put(-21,-45){$ s$} \put(-54,37){${s-\frac12}$} \put(5,37){${
\frac12+s}$} \put(-5,37){${\frac12+u}$} \put(-31,-45){$ u$} \put(-64,37){${u-
\frac12}$} \put(20,-45){$ v$} \put(-15,37){${v-\frac12}$} \put(47,37){${v+
\frac12}$} \put(-40,22){$A_{s,t}$} \put(20,22){$B_{s,t}$}
\put(-8,22){$C_{u,v}$} \put(1,22){$D_{s,u}$}
\put(-59,22){$D'_{s,u}$} \put(-19,22){$E_{t,v}$}
\put(41,22){$E'_{t,v}$} \put(5,-45){${1-t}$}
\put(-25,37){${\frac12-t}$} \put(35,37){${\frac32-t}$} \thicklines \put(-45,-18){{
\line(1,0){40}}} \put(-5,-18){{\line(1,0){40}}} \put(-50,5){{\line(0,1){30}}}
\put(-34.6,-18){{\line(-2,3){15.4}}} \put(10,5){{\line(0,1){30}}} \put(-5.5,-18){{
\line(2,3){15.4}}} \put(-60,5){{\line(0,1){30}}} \put(0,5){{\line(0,1){30}}}
\put(-15.3,-18){{\line(2,3){15.4}}} \put(-44.6,-18){{\line(-2,3){15.4}}}
\put(-20,5){{\line(0,1){30}}} \put(-4.5,-18){{\line(-2,3){15.4}}} \put(40,5){{
\line(0,1){30}}} \put(24.5,-18){{\line(2,3){15.4}}} \put(-10,5){{\line(0,1){30}}}
\put(5.5,-18){{\line(-2,3){15.4}}} \put(50,5){{\line(0,1){30}}} \put(34.5,-18){{
\line(2,3){15.4}}} \end{picture}
\caption{The sets
$A,B,C,D,D',E,E'$.} \label{figappABCDE} \end{figure}
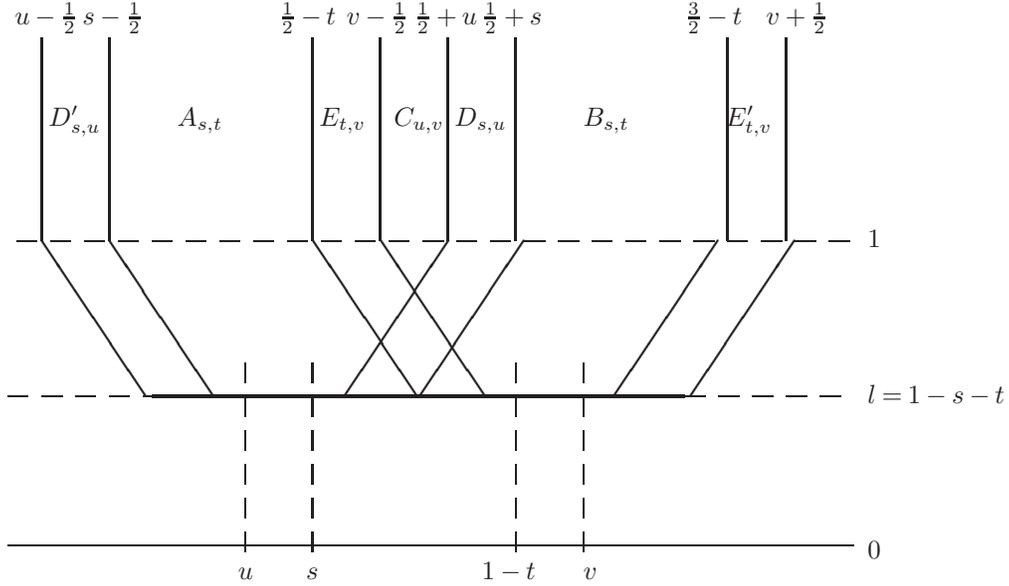

\begin{lem}
\label{lemappeven-better}
If $q+q' < q_{\max}$, then for $s,t\in(0,1)$ such that $s + t < 1/2$,
%
\begin{eqnarray}
\label{eqappcov-plus-fort} \cov\bigl(M^q\bigl([0,s]\bigr),M^{q'}
\bigl([1-t,1]\bigr)\bigr) = \mathrm{O} \bigl( (s+t)^{\{\zeta(q)+\zeta(q')+1\} } \bigr) .
\end{eqnarray} %
\end{lem}

\begin{pf}
Define $l=1-s-t$ and
$M_l(\mathrm{d} u) = \mathrm{e}^{-w_l(u)} M(\mathrm{d} u)$.
By construction, the measure $M_l$ is independent of $
\{w_l(u) \}$ and $M_l ([0,s] )$ is
independent of $M_l ([1-t,1] )$. Define the sets
$A_{s,t}$ and $B_{s,t}$ by %
\[
A_{s,t} = A_l(s) \setminus A_l(1-t) ,\qquad
B_{s,t} = A_l(1-t) \setminus A_l(s).
\]
For $u \leq s$ and $v \geq1-t$, define %
\begin{eqnarray*}
C_{u,v} &=& A_l(u) \cap A_l(v) ,
\\
D_{s,u} &=& C_{s,v} \setminus C_{u,v} ,\qquad
D'_{s,u} = A_l(u) \setminus
A_l(s),
\\
E_{t,u} &=& C_{u,1-t} \setminus C_{u,v} ,\qquad
E'_{t,v} = A_l(v) \setminus
A_l(1-t) .
\end{eqnarray*} %
See Figure~\ref{figappABCDE}
for an illustration. Note that all these sets are above the horizontal line at
level $l=1-s-t$, hence $P(A)$ is independent of $M_l$ and $P(A)$ is
independent of $P(B)$, where $A,B$ are any two of these sets. Note also that $
\bigcup_{u \leq s, v \geq1-t} C_{u,v} = C_{s,1-t}$,\vspace*{1pt} $D_{s,u}
\subset C_{s,1-t}$, $E_{t,v} \subset C_{s,1-t}$,
$D'_{s,u} \subset D'_{s,0}$ and
$E'_{t,v}\subset E'_{t,1}$. We
moreover have %
\begin{eqnarray*}
\mu(A_{s,t}) &= &\mu(B_{s,t}) = 1 ,
\\
\mu(C_{s,1-t}) &=& -\log(1-s-t) ,
\\
\mu \bigl(D'_{s,0} \bigr) &= &\frac s{1-s-t} ,\qquad \mu
\bigl(E'_{t,1} \bigr) = \frac t{1-s-t} .
\end{eqnarray*} %
Moreover, for $u\leq s$ and $v\geq1-t$, we have the
following decompositions: %
\begin{eqnarray*}
w_l(u) & =& P(A_{s,t}) + P(C_{u,v}) + P
\bigl(D'_{s,u} \bigr) + P(E_{t,v}) ,
\\
w_l(v) & =& P(B_{s,t}) + P(C_{u,v}) +
P(D_{s,u}) + P \bigl(E'_{t,v} \bigr) .
\end{eqnarray*} %
Recall that the random measure $P$ can be split into
two independent random measures $P_0$ and $P_1$ such
that $P=P_0+P_1$. For $i=0,1$ and $u\in[0,s]$, define
$\pi_{i,l}(u) = P_i (D'_{s,u}
)+P_i(C_{u,1-t})$ and %
\[
\pi_l(u) = \pi_{0,l}(u) + \pi_{1,l}(u) .
\]
Similarly, for $i=0,1$ and $v\in[1-t,1]$, define $\pi'_{i,l}(v) =
P_i(E'_{t,v})+P_i(C_{s,v})$ and
\[
\pi'_l(v) = \pi'_{0,l}(v) +
\pi'_{1,l}(v) .
\]
Let $\bar M_l$ and $\bar M'_l$ denote the normalized measures $M_l/M_l([0,s])$
and $M_l/M_l([1-t,1])$ and
\begin{eqnarray*}
\zeta_l&=& M_l \bigl([0,s] \bigr) ,\qquad
\xi_l = M_l \bigl([1-t,1] \bigr) ,
\\
\gamma_l &=& \int_0^{s} \bigl\{
\mathrm{e}^{\pi_{l}(u)}-1 \bigr\} \bar M_l(\mathrm {d} u) ,\qquad
\gamma'_l = \int_{1-t}^1
\bigl\{\mathrm{e}^{\pi'_l(v)}-1 \bigr\} \bar M'_l(
\mathrm {d} v) ,
\\
R_l& =& (1+\gamma_l)^q- 1 - q
\gamma_l ,\qquad R'_l \bigl(1+
\gamma'_l \bigr)^{q'}- 1 - q'
\gamma'_l .
\end{eqnarray*}
This yields
\begin{eqnarray*}
M^q \bigl([0,s] \bigr) & = &\mathrm{e}^{qP(A_{s,t})}
\zeta_l^q \times \{ 1 + q\gamma_l +
R_l \} ,
\\
M^{q'} \bigl([1-t,1] \bigr) & =& \mathrm{e}^{q'P(B_{s,t})}
\xi_l^{q'} \times \bigl\{ 1 + q'
\gamma_l+R'_l \bigr\} .
\end{eqnarray*}
Note that $\zeta_l$ and $\xi_l$ are independent and independent of $\pi_l$ and
$\pi'_l$ which are independent of $M_l$. Thus, $\xi_l$ is also
independent of
$\gamma_l$ and $R_l$, and $\zeta_l$ is independent of $\gamma'_l$ and
$R'_l$. Also, $P(A_{s,t})$ and $P(B_{s,t})$ are independent of all the other
quantities, and $\esp[\mathrm{e}^{qP(A_{s,t})}] = \esp[\mathrm
{e}^{qP(B_{s,t})}] =
\mathrm{e}^{\psi(q)}$. Thus,
%
\begin{eqnarray}
&&\mathrm{e}^{-\psi(q)-\psi(q')}  \cov \bigl(M^q \bigl([0,s]
\bigr),M^{q'} \bigl([1-t,1] \bigr) \bigr)
\nonumber
\\
&&\quad = qq'\cov \bigl(\zeta_l^q
\gamma_l, \xi_l^{q'}\gamma'_l
\bigr) + q \esp \bigl[\xi_l^q\zeta_l^{q'}
\gamma_lR'_l \bigr] - q \esp \bigl[
\zeta_l^q\gamma_l \bigr] \esp \bigl[
\xi_l^{q'}\gamma'_l \bigr]
\label{eqappcov-1}
\\
&&\qquad{} + q' \esp \bigl[\zeta_l^q
\xi_l^{q'} R_l \gamma'_l
\bigr] - q' \esp \bigl[\zeta_l^q
R_l \bigr] \esp \bigl[\xi_l^{q'}
\gamma'_l \bigr] + \esp \bigl[\zeta_l^q
\xi_l^{q'} R_l R'_l
\bigr] - \esp \bigl[\zeta_l^q R_l \bigr]
\esp \bigl[\xi_l^{q'}R'_l
\bigr] . \label{eqappcov-2}\qquad\quad
\end{eqnarray}
We will show that all the terms on the right-hand side are of order $(s+t)^{-1}
\esp[\xi_k^q]\esp[\zeta_k^{q'}]$. Since $\pi_{l}$ and $\pi'_l$ are independent
of the measure $M_l$, using the definition of $\pi_l$ and $\pi'_l$ and
the fact
that the random measure $P'$ has independent increments, and $\esp
[\mathrm
e^{P(A)}] = 1$ for all measurable set $A$ with finite $\mu$ measure, we have
\begin{eqnarray*}
\cov \bigl(\zeta_l^q\gamma_l,
\xi_l^{q'}\gamma_l \bigr) & = &\esp \biggl[
\zeta_l^q\xi_l^{q'} \int
_0^{s}\int_{1-t}^1
\cov \bigl(\mathrm{e}^{\pi_l(u)},\mathrm{e}^{\pi'_{l}(v)} \bigr) \bar
M_l(\mathrm{d} u) \bar M'_l(\mathrm{d}
v) \biggr]
\\
& = &\esp \biggl[ \zeta_l^q \xi_l^{q'}
\int_0^{s}\int_{t}^1
\operatorname{var} \bigl(\mathrm{e}^{P(C_{u,v})} \bigr) \bar
M_l( \mathrm{d} u) \bar M'_l(\mathrm{d}
v) \biggr]
\\
& = &\esp \biggl[ \zeta_l^q \xi_l^{q'}
\int_0^{s}\int_{1-t}^1
\bigl\{\mathrm{e}^{\psi(2)\mu(C_{u,v})}-1 \bigr\} \bar M_l(\mathrm{d} u)
\bar M'_l(\mathrm{d} v) \biggr]
\\
& \leq&\esp \bigl[ \zeta_l^q \bigr] \esp \bigl[
\xi_l^{q'} \bigr] \bigl\{\mathrm e^{\psi(2)\mu(C_{s,1-t})}-1
\bigr\} \leq C \esp \bigl[ \zeta_l^q \bigr] \esp \bigl[
\xi_l^{q'} \bigr] (s+t) .
\end{eqnarray*}
If $q>0$, a second order Taylor expansion yields that there exists a constant
$C_q\geq1$ such that for all $x\geq-1$,
%
\begin{equation}
\bigl|(1+x)^q - 1 - q x\bigr| \leq C_q \bigl(x^2+
|x|^{q\vee2} \bigr) . \label{eqapptrivial}
\end{equation}
Applying~(\ref{eqapptrivial}) and Jensen's inequality (since by
definition $\bar
M_l$ is a probability measure on $[0,s]$), we obtain, with $r_{l} =
\sup_{u\in[0,s]} |\mathrm{e}^{\pi_l(u)} - 1 |$, which is independent of $M_l$,
\begin{eqnarray*}
|R_l|  \leq C \int_0^s \bigl
\{ \bigl\llvert \mathrm{e}^{\pi_l(u)} - 1 \bigr\rrvert^{q\vee2} +
\bigl\llvert \mathrm{e}^{\pi_l(u)} - 1 \bigr\rrvert^{2} \bigr\}
\bar M_l(\mathrm{d}u) \leq C \bigl(r_{l}^{q\vee2}+r_{l}^2
\bigr).
\end{eqnarray*}
Define $r'_l = \sup_{u\in[0,s]} |\mathrm e^{\pi_l(u)} - 1 |$ and note that
$|\gamma_l| \leq r_l$ and $|\gamma'_l| \leq r'_l$. We thus get
\begin{eqnarray*}
\esp \bigl[\zeta_l^q\xi_l^{q'}
R_l \gamma'_l \bigr] \leq\esp \bigl[
\zeta_l^q\xi_l^{q'} \bigr] \esp
\bigl[ \bigl(r_l^2+r_i^{q\vee2}
\bigr) r'_l \bigr] \leq\esp \bigl[
\zeta_l^q \xi_l^{q'} \bigr]
\esp^{1/2} \bigl[ \bigl(r_l^2+r_i^{q\vee2}
\bigr)^2 \bigr] \esp^{1/2} \bigl[ {r'_l}^2
\bigr] .
\end{eqnarray*}
Applying~(\ref{eqappmax-qgeq2}), we obtain, for any $h\geq2$,
\begin{eqnarray*}
\esp \bigl[r_l^h \bigr] & =& \mathrm{O} \bigl(
\mu(C_{s,1-t})+\mu \bigl(D'_{0,s} \bigr) \bigr) =
\mathrm{O}(s+t) ,
\\
\esp \bigl[{r'_l}^h \bigr] & =& \mathrm{O} \bigl(
\mu(C_{s,1-t})+\mu \bigl(E'_{t,1} \bigr) \bigr) =
\mathrm{O}(s+t) .
\end{eqnarray*}
Thus finally
\[
\esp \bigl[\zeta_l^q\xi_l^{q'}
R_l \gamma'_l \bigr] \leq C(s+t) \esp
\bigl[\zeta_l^q \bigr] \esp \bigl[\xi_l^{q'}
\bigr] .
\]
The remaining terms in~(\ref{eqappcov-1}) and~(\ref{eqappcov-2}) are
dealt with similarly and we obtain
\[
\bigl|\cov \bigl(M^q \bigl([0,s] \bigr),M^{q'} \bigl([0,t]
\bigr) \bigr)\bigr| \leq C (s+t) \esp \bigl[\zeta_l^q \bigr]
\esp \bigl[\xi_l^{q'} \bigr] .
\]
The previous considerations also yield that
\begin{eqnarray*}
s^{\zeta(q)} &=& \esp \bigl[M^q \bigl([0,s] \bigr) \bigr]  =
\mathrm{e}^{\psi(q)} \esp \bigl[\zeta_l^q \bigr]
\bigl\{1+\mathrm{O}(s+t) \bigr\} ,
\\
t^{\zeta(q)} &=& \esp \bigl[M^{q'} \bigl([1-t,1] \bigr) \bigr]  =
\mathrm{e}^{\psi(q')} \esp \bigl[\xi_l^{q'} \bigr]
\bigl\{1+\mathrm{O}(s+t) \bigr\}
\end{eqnarray*}
and all the previous bounds finally yield~(\ref{eqappcov-plus-fort}).
\end{pf}

\begin{lem}
\label{lemappcov-differences}
If $2q < q_{\max}$, then for $k=1,\ldots,2^{n}-1$,
%
\begin{equation}
\label{eqappcov-differences} 2^{n\zeta(2q)} \esp[D_{0,0,n,q}
D_{0,k,n,q}] = \mathrm{O} \bigl(k^{-\{\psi(2q)-2\psi
(q)+1\}} \bigr) .
\end{equation}
\end{lem}

\begin{pf}
By the scaling property, and since $\esp[D_{0,k,n,q}]=0$, we have
\begin{eqnarray*}
&&2^{n\zeta(2q)} \esp[D_{0,0,n,q} D_{0,k,n,q}] \\
&&\quad = k^{\zeta(2q)}
\operatorname{cov} \biggl( M^q \biggl( \biggl[0,\frac1k \biggr]
\biggr),M^q \biggl( \biggl[1-\frac1{k},1 \biggr] \biggr) \biggr)
\\
&&\qquad{} - 2^{\tau(q)} \biggl(k-\frac12 \biggr)^{\zeta(2q)} \operatorname{cov}
\biggl( M^q \biggl( \biggl[0,\frac1{k-1/2} \biggr] \biggr),
M^q \biggl( \biggl[1-\frac{1}{2k-1},1 \biggr] \biggr) \biggr)
\\
&&\qquad{} - 2^{\tau(q)} k^{\zeta(2q)} \operatorname{cov} \biggl(
M^q \biggl( \biggl[0,\frac1k \biggr] \biggr), M^q \biggl(
\biggl[1-\frac1{2k},1 \biggr] \biggr) \biggr)
\\
&&\qquad{} - 2^{\tau(q)} k^{\zeta(2q)} \operatorname{cov} \biggl(
M^q \biggl( \biggl[0,\frac1{2k} \biggr] \biggr), M^q
\biggl( \biggl[1-\frac1{k},1 \biggr] \biggr) \biggr)
\\
&&\qquad{} + 2^{2\tau(q)} (k-1/2)^{\zeta(2q)} \operatorname{cov} \biggl(
M^q \biggl( \biggl[0,\frac1{2k-1} \biggr] \biggr), M^q
\biggl( \biggl[1-\frac1{2k-1},1 \biggr] \biggr) \biggr)
\\
&&\qquad{} + 2^{2\tau(q)} k^{\zeta(2q)} \operatorname{cov} \biggl(
M^q \biggl( \biggl[0,\frac1{2k} \biggr] \biggr), M^q
\biggl( \biggl[1-\frac1{2k},1 \biggr] \biggr) \biggr)
\\
&&\qquad{} - 2^{\tau(q)} k^{\zeta(2q)} \operatorname{cov} \biggl(
M^q \biggl( \biggl[\frac1{2k},\frac1k \biggr] \biggr),
M^q \biggl( \biggl[1-\frac1k,1 \biggr] \biggr) \biggr)
\\
&&\qquad{} + 2^{2\tau(q)} (k-1/2)^{\zeta(2q)} \operatorname{cov} \biggl(
M^q \biggl( \biggl[\frac1{2k-1},\frac2{2k-1} \biggr] \biggr),
M^q \biggl( \biggl[1-\frac1{2k-1},1 \biggr] \biggr) \biggr)
\\
&&\qquad{} + 2^{2\tau(q)} k^{\zeta(2q)} \operatorname{cov} \biggl(
M^q \biggl( \biggl[\frac1{2k},\frac1k \biggr] \biggr),
M^q \biggl( \biggl[1-\frac1{2k},1 \biggr] \biggr) \biggr) .
\end{eqnarray*}
Applying Lemma~\ref{lemappeven-better}, with $s$ and $t$ replaced by
$k$ and
$2k$ and $q=q'$, we obtain that each covariance term that appears
above is of order $k^{-2\zeta(q)-1}$, which yields $ 2^{n\zeta(2q)}
\esp[D_{0,0,n,q} D_{0,k,n,q}] = \mathrm{O}(k^{\zeta(2q)-2\zeta(q)-1})$, and since
$\zeta(2q) - 2 \zeta(q) = 2\psi(q)-\psi(2q)$, the
bound~(\ref{eqappcov-differences}) is proved.
\end{pf}

\begin{lem}
\label{lemappmoment-2p-Dnq}
If $4q<q_\chi$, then
\[
\esp \bigl[D_{0,n,q}^{4} \bigr] = \mathrm{O} \bigl(n2^{-n\tau(4q)} +
2^{-2n\tau(2q)} \bigr) .
\]
\end{lem}

\begin{pf}
Let us compute the fourth moment of $D_{0,n,q}$. For brevity, let the
centered random variables
$D_{0,k,n,q}$ be simply denoted by $x_k$. We have
%
\begin{eqnarray}
\label{eqappmoment-2p-Dnq} \esp \bigl[D_{0,n,q}^4 \bigr]& =&
2^n \esp \bigl[x_{0}^4 \bigr]  + \sum
_{0 \leq i \ne j \leq
2^n-1} \esp \bigl[x_i^2x_j^2
\bigr] + \sum_{0 \leq i \ne j \leq2^n-1} \esp \bigl[x_i^3x_j
\bigr]
\nonumber
\\[-8pt]
\\[-8pt]
\nonumber
&&{} + \mathop{\sum_{1 \leq i,j,k \leq2^n}}_{\#\{i,j,k\}=3} \esp \bigl[x_i^2x_jx_k
\bigr] + \mathop{\sum_{1 \leq i,j,k,l \leq2^n }}_{\#\{i,j,k,l\}=4} \esp [x_ix_jx_kx_l]
.
\end{eqnarray}
By the scaling property and Lemma~\ref{lemappeven-better}, obtain that
\[
2^{n\zeta(4q)} k^{-\zeta(4q)} \esp \bigl[x_1^2x_k^2
\bigr] = \mathrm{O} \bigl( k^{-2\zeta
(2q)} \bigr) .
\]
Since $\zeta(4q)<2\zeta(2q)$, this yields
\begin{eqnarray*}
\sum_{0 \leq i \ne j \leq2^n-1} \esp \bigl[x_i^2x_j^2
\bigr] = \mathrm{O} \Biggl( 2^{-n\tau(4q)} \sum_{k=0}^{2^n-1}
k^{\zeta(4q)-2\zeta(2q)} \Biggr) = \mathrm{O} \bigl( n 2^{-n\tau
(4q)}+2^{-2n\tau(2q)} \bigr)
.
\end{eqnarray*}
Again, by Lemma~\ref{lemappeven-better}, we have
\begin{eqnarray*}
2^{n\zeta(4q)} k^{-\zeta(4q)} \esp \bigl[x_1^3x_k
\bigr] = 2^{n\zeta(4q)} k^{-\zeta(4q)} \cov \bigl(x_1^3,x_k
\bigr) = \mathrm{O} \bigl( k^{-\zeta(3q)-\zeta(q)-1} \bigr) .
\end{eqnarray*}
By~(\ref{eqpsi-pq}), if $4q<q_\chi$, then $\psi(4q)>4\psi(3q)/3$ and
$\psi(3q)/3>\psi(q)$, so $\zeta(4q)-\zeta(3q)-\zeta(q)<0$, thus
\begin{eqnarray*}
\sum_{0 \leq i \ne j \leq2^n-1} \esp \bigl[x_i^3x_j
\bigr] = \mathrm{O} \Biggl( 2^{-n\tau(4q)} \sum_{k=0}^{2^n-1}
k^{\zeta(4q)-\zeta(3q)-\zeta(q)-1} \Biggr) = \mathrm{O} \bigl(2^{-n\tau(4q)} \bigr) .
\end{eqnarray*}
We now calculate the fourth term in the expansion~(\ref{eqappmoment-2p-Dnq})
of $\esp[D_{0,n,q}^4]$. By stationarity we may assume $i=0$ and without
loss of
generality assume $j<k/2$. Set $y_\ell=D_{0,\ell,\log_2(k),q} $ for
$\ell=1,\ldots, k$. Then by the scaling property
\[
\esp \bigl[x_i^2x_jx_k
\bigr] = \bigl(k/2^n \bigr)^{\zeta(4q)} \esp \bigl[y_1^2y_jy_k
\bigr] .
\]
Since $\esp[y_k] = 0$, from the definition of $D_{\ell,\log_2(k),q}$ we may
write
%
\begin{eqnarray}
\label{eqappproof-fourthterm-1} \esp \bigl[y_1^2y_jy_k
\bigr] &=& \cov \bigl(y_1^2y_j,y_k
\bigr)
\nonumber\\
&=& \sum_{l,s,t} \beta_l
\alpha_s \eta_t
\cov \bigl(M^{r_l q}(\Delta_{1, \log_2(b_l k)}) M^{(2-r_l) q}(
\Delta_{1, \log_2(b_l
k)})\\
&&\hspace*{4pt}\quad{}\times M^{ q}(\Delta_{j, \log_2(b_s k)}),
M^{ q}(\Delta_{k, \log_2(b_t
k)}) \bigr) ,\nonumber
\end{eqnarray}
where $r_l\in\{1,2\}$ and $b_l,b_s,b_t\in\{1,2\}$ indicate whether the
scale is
$k$ or $2k$. Set $\ell=1-j/k$. In the notation of
Lemmas~\ref{lemappapprox-MnDeltan} and~\ref{lemappeven-better} set
$C=A_\ell((j-1)/k,j/k)\cap A_\ell((k-1)/k,1)$, $A_{1}=A_\ell(1/k,(j-1)/k)$,
$A_{2}=A_\ell((j-1)/k, j/k)\cap A_\ell(0,1/k)$ and
$A_{3}=B_\ell(j/k,(k-1)/k)$. So that $A_i\cap A_3=\varnothing$ for
$i=1,2$ and
$A_i\cap C=\varnothing$ for $i=1,2,3$. Also define
\begin{eqnarray*}
\zeta_{l,1} & = M^{r_l q}_\ell(
\Delta_{i, \log_2(b_l k)}) M^{q}_\ell (\Delta_{j,\log_2(b_s k)})
, \qquad \zeta_{l,2} = M^{ q}_\ell(\Delta_{k,\log
_2(b_t k)})
,
\end{eqnarray*}
which by construction are independent of $\mathrm{e}^{P(A_i)}$ and
$\mathrm{e}^{P(C)}$. Then
\begin{eqnarray*}
M^{r_l q}(\Delta_{i,\log_2(b_l k)}) M^{ q}(
\Delta_{j,\log_2(b_s k)}) & =& \mathrm{e}^{qr_l(P(A_1)+P(A_2))} \mathrm{e}^{q(P(A_1)+P(C))}
\zeta_{l,1} \times \{ 1 + q\gamma_{l,1} + R_{l,1} \}
,
\\
M^{ q}(\Delta_{k,\log_2(b_t k)}) & =& \mathrm{e}^{q(P(A_3)+P(C))}
\zeta_{l,2} \times \{ 1 + q\gamma_{l,2}+R_{l,2} \} ,
\end{eqnarray*}
where $\gamma_{l,i}$ and $R_{l,i}$ are independent of $\zeta_{l,i}$,
$\mathrm{e}^{P(A_i)}$ and $\mathrm{e}^{P(C)} $ and satisfy
$\esp[\gamma_{l,1}\gamma_{l,2}]=\mathrm{O}(1/k)$, $\esp[\gamma_{l,i}R_{l,i}]=\mathrm{O}(1/k)$ and
$\esp[R_{l,1}R_{l,2}]=\mathrm{O}(1/k)$. Finally set
$K_{l,1}=\break \esp[\zeta_{l,1}\mathrm{e}^{qr_l(P(A_1)+P(A_2))}\times  \mathrm{e}^{qP(A_1)}]$
and $K_{l,2}=\esp[\zeta_{l,2} \mathrm{e}^{qP(A_3)}]$. Then, for each of the
terms in (\ref{eqappproof-fourthterm-1})
%
\begin{eqnarray}
\label{eqappproof-fourthterm-2}
&&\cov \bigl(M^{r_l q}(
\Delta_{i,\log_2(b_l k)})  M^{q}( \Delta_{j,\log_2(b_s
k)}),
M^{q}(\Delta_{k,\log_2(b_t k)}) \bigr)
\nonumber
\\
&&\quad= K_{l,1} K_{l,2} \var(C) \bigl(1+\mathrm{O}(1/k) \bigr)
\nonumber
\\[-8pt]
\\[-8pt]
\nonumber
&&\quad = \frac{1}{\esp^2[\mathrm{e}^{qP(C)}]} \esp \bigl[ M^{r_l q}(\Delta_{i,\log_2(b_l k)})
M^{ q}(\Delta_{j,\log_2(b_s k)}) \bigr]
\\
&&\qquad{} \times\esp \bigl[M^{ q}_\ell(
\Delta_{k,\log_2(b_t k)}) \bigr]\log (1-j/k) \bigl(1+\mathrm{O}(1/k) \bigr)^2 .\nonumber
\end{eqnarray}
Adding up all the terms in (\ref{eqappproof-fourthterm-1}) and using
$\esp
[M^{q}_\ell(\Delta_{k,\log_2(b_t k)})] = \mathrm{O}(k^{-\zeta(q)})$ for all $\ell$
yields
\[
\esp \bigl[y_1^2y_jy_k
\bigr] = \mathrm{O}(j/k) \esp \bigl[y_1^2y_j
\bigr]k^{\zeta(q)} .
\]
On the other hand, again by the scaling property, and because $\esp[y_j]=0$,
$\esp[y_1^2y_j]= \cov(y_1^2,y_j)$ and applying
Lemma~\ref{lemappeven-better} we have
%
\begin{equation}
\label{eqappproof-fourthterm-3} \esp \bigl[y_1^2y_j
\bigr] = \mathrm{O} \bigl(k^{-\zeta(3q)}j^{\zeta(3q)-\zeta(2q)-\zeta(q)-1} \bigr) .
\end{equation}
By (\ref{eqappproof-fourthterm-2}) and (\ref{eqappproof-fourthterm-3})
we obtain the bound
\[
\esp \bigl[x_0^2x_jx_k
\bigr]  = \mathrm{O} \bigl(2^{-n\zeta(4q)} j^{\zeta(3q)-\zeta(2q)-\zeta(q)} k^{\zeta(4q)-\zeta(3q)-\zeta(q)-1} \bigr) .
\]
Noting that by convexity of $\psi$, it holds that $2\psi(q)<\psi(2q)$,
this yields
\[
\mathop{\sum_{1 \leq i,j,k \leq2^n }}_{\#\{i,j,k\}=3} \esp \bigl[x_i^2x_jx_k
\bigr]  = \mathrm{O} \bigl(2^{-2n\tau(2q)} + n 2^{-n\tau(4q)} \bigr) .
\]
For the last term in (\ref{eqappmoment-2p-Dnq}) by stationarity set
$i=0$, and
assume $j<\ell<k$ and moreover that $\ell-j<k/2$. Write
\[
\esp[x_ix_jx_\ell x_k] =
\cov(x_i x_j, x_\ell x_k) +
\esp[y_iy_j] \esp [y_\ell
y_k] .
\]
The term $\cov(y_1y_j,y_\ell y_k)$ can be shown to be of smaller order
than the
product of expectations. Thus, applying Lemma~\ref{lemappcov-differences},
we finally obtain
\[
\hspace*{105pt}\mathop{\sum_{1 \leq i,j,k,l \leq2^n }}_{\#\{i,j,k,l\}=4} \esp [x_ix_jx_kx_\ell]
= \mathrm{O} \bigl(2^{-2n\tau(q)} \bigr) .\hspace*{105pt}\qed
\]
\noqed\end{pf}

\textit{Bounds for the MRW, case $H>1/2$}.
Define $\tilde{a}_{j,k,n,H} = \mathrm{e}^{w_{l_n}(t_{j,k})} \tilde
\delta_{j,k,n,H}$ with
\[
\tilde\delta_{j,k,n,H}^2 = \int_{\Delta_{k,n}^{(j)}} \int
_{\Delta_{k,n}^{(j)}} |u-v|^{2H-2} M_n(\mathrm{d} u)
M_n(\mathrm{d} v)
\]
and for $j_1\ne j_2$,
\[
\tilde{\rho}_{H} \bigl(j_1,j_2,k,k'
\bigr) = \frac{\int_{\Delta_{k,n}^{(j_1)}}
\int_{\Delta_{k',n}^{(j_2)}} |u-v|^{2H-2} M_n(\mathrm{d} u)
M_n(\mathrm{d}v) }{ \delta_{j_1,k,n,H} \delta_{j_2,k,n,H} } .
\]
%

\begin{lem}
\label{lemappcotadef1}
For $p\ge1$ such that $2pq<q_\chi$ and for $r\geq2$, there exist $\eta,C>0$
and uniformly bounded constants $c_{q,H}(k,k')$ such that
%
\begin{eqnarray}
\bigl\llvert 2^{n\zeta_H(2q)} \mathrm{e}^{\psi(2q)} l_n^{-\psi(2q)}
\esp \bigl[ \tilde\delta_{j,k,n,H}^{2q} \bigr] -
m_H(2q) \bigr\rrvert &=& \mathrm{O} \bigl(2^{-n\eta} \bigr) ,
\label{eqappmndeltan-H}
\\
\bigl\llvert 2^{n\zeta_H(2q)} \mathrm{e}^{\psi(2q)} l_n^{-\psi(2q)}
\esp \bigl[ \tilde \delta_{0,k,n,H}^q \tilde
\delta_{0,k',n,H}^q \bigr] - c_{q,H}
\bigl(k,k' \bigr) \bigr\rrvert &=& \mathrm{O} \bigl(2^{-n\eta} \bigr) ,
\label{eqappcota1492}
\\
2^{n\zeta_H(2pq)} \esp \bigl[ \bigl|a_{0,k,n,H}^{2q} -
\tilde{a}_{0,k,n,H}^{2q}\bigr|^p \bigr] &=&\mathrm{O}
\bigl(2^{-n\eta} \bigr) , \label{eqappcota164}
\\
2^{n\zeta_H(2pq)} \esp \bigl[\bigl|a_{0,k,n,H}^q
a_{0,k',n,H}^q - \tilde{a}_{0,k,n,H}^q
\tilde{a}_{0,0,k',n,H}^q\bigr|^p \bigr]& =&\mathrm{O}
\bigl(2^{-n\eta} \bigr) . \label{eqappcota162}
\end{eqnarray}

\end{lem}
\begin{pf}
Note that~(\ref{eqappcota162}) implies~(\ref{eqappcota1492})
and~(\ref{eqappcota164}) implies~(\ref{eqappmndeltan-H}). By
stationarity of
increments, we can assume without loss of generality that $k'=0$. For
brevity, denote $a_k = a_{0,k,n,H}$, $\tilde a_k = \tilde a_{0,k,n,H}$ and
$\tilde\delta_k = \tilde\delta_{0,k,n,H}$. Generalizing the notation
of the
proof of Lemma~\ref{lemappapprox-MnDeltan}, we can write $ a_k^2 =
\xi_k^2(R_k+1)$ with $\xi_k = \mathrm{e}^{P(I_n(k))} \tilde\delta_k$, $I_n(k)
= A_{l_n}(k2^{-n}) \setminus A_{l_n}(2^{-n})$, $B_k(u) =
A_{l_n}(u)\setminus
I_n(k)$ and
\begin{eqnarray*}
R_k = \int_{\Delta_{k,n}} \int_{\Delta_{k,n}}
\bigl\{\mathrm e^{P(B_k(u))+P(B_k(v))} - 1 \bigr\} |u-v|^{2H-2} \tilde
M_k(\mathrm{d} u) \tilde M_k(\mathrm{d} v) .
\end{eqnarray*}
Denote $r_k = \sup_{u\in\Delta_{k,n}} |\mathrm{e}^{P(B_k(u))}-1|$. Then $|R_k|
\leq(1+r_k)^2-1$, the sequence $\{r_k,k=0, \ldots,2^{n}-1\}$ is
independent of
the measures $\tilde M_k$, $0 \leq k \leq2^n-1$ and by~(\ref{eqappmax-qgeq2})
and H\"older's inequality, we have, for $p\geq1$, $\esp[|r_0|^p] =
\mathrm{O}(\sqrt{\mu(B_0(2^{-n})}) = \mathrm{O}(2^{-\alpha n/2})$. Thus
\begin{eqnarray*}
\esp \bigl[ \bigl\llvert a_k^{q} -
\mathrm{e}^{qP(I_n(k))} \tilde\delta_k^q \bigr
\rrvert^p \bigr] \leq\esp \bigl[\mathrm{e}^{pqP(I_n(k))} \bigr] \esp
\bigl[\tilde\delta_0^{pq} \bigr] \mathrm{O} \bigl(2^{-\alpha n}
\bigr) ,
\end{eqnarray*}
which proves~(\ref{eqappcota164}). Since $\esp[\mathrm{e}^{qP(I_n(k))}]
\sim
\esp[\mathrm{e}^{qw_{l_n}(0)}] = \mathrm{e}^{\psi(q)} l_n^{-\psi(q)}$,
this implies
that $\esp[\tilde\delta_0^q] \sim cl_n^{\psi(q)} 2^{-n\zeta_H(q)}$. Next,
using the bound $|(1+x)^q-1| \leq C (|x| + |x|^{q\wedge1})$ valid for
$x\geq0$, we obtain
\begin{eqnarray*}
\esp \bigl[ \bigl|a_0^qa_k^q-
\xi_0^q \xi_k^q\bigr|^p
\bigr] & \leq&\esp \bigl[ \xi_0^{pq}\xi_k^{pq}
\bigl|(R_0+1)^{q/2}(R_k+1)^{q/2}-1\bigr|^p
\bigr]
\\
& \leq&\esp \bigl[ \xi_0^{pq}\xi_k^{pq}
\bigr] \esp \bigl[\bigl|(r_0+1)^{q}(r_k+1)^{q}-1\bigr|^p
\bigr] \leq C 2^{-\eta n} \esp \bigl[ \xi_0^{2pq}
\bigr]
\end{eqnarray*}
for some $\eta>0$. This proves~(\ref{eqappcota162}).
\end{pf}

\begin{lem}
\label{lemappcov-diff-H>12} If $2q<q_\chi$, then
%
\begin{equation}
\label{eqappcov-diff-H>12} 2^{n\zeta_H(2q)} \bigl|\esp[U_{0,n,0},U_{0,n,k}]
\bigr| \leq C k^{-\{\psi
(2q)-2\psi(q)+1\}} .
\end{equation}

\end{lem}

\begin{pf}
For $k\geq1$, denote
\begin{eqnarray*}
U_{k} & = &\int_0^{1/k} \int
_0^{1/k} |u - v|^{2H-2} M(\mathrm{d} u)
M(\mathrm{d} v) ,\\
 U'_{k} &= &\int_{1/2k}^{1/k}
\int_{1/2k}^{1/k} |u-v|^{2H-2} M(\mathrm{d}
u) M(\mathrm{d} v) ,
\\
V_{k} & =& \int_{1-1/k}^1 |u -
v|^{2H-2} M(\mathrm{d} u) M(\mathrm{d} v) ,\\
 V'_{k}
&= &\int_{1-1/k}^{1-1/2k} \int_{1-1/k}^{1-1/2k}
|u-v|^{2H-2} M(\mathrm{d} u) M(\mathrm{d} v) .
\end{eqnarray*}
Then, by the scaling property, we have
\begin{eqnarray*}
&&2^{n\zeta_H(2q)} \esp[U_{0,n,0}U_{0,n,k}]\\
 &&\quad = k^{\zeta_H(2q)}
\cov \bigl(U_{k}^q,V_{k}^q \bigr)
- 2^{\tau_H(q)} (k-1/2)^{\zeta_H(2q)} \cov \bigl(U_{k}^q,V_{2k}^q
\bigr)
\\
&&\qquad{} - 2^{\tau_H(q)} k^{\zeta_H(2q)} \bigl\{ \cov \bigl(U_{k}^q,{V'_{k}}^q
\bigr) - \cov \bigl(U_{2k}^q,V_{k}^q
\bigr) + \cov \bigl({U'_{k}}^q,V_{k}^q
\bigr) \bigr\}
\\
&&\qquad{} + 2^{2\tau_H(q)} (k-1/2)^{\zeta_H(2q)} \bigl\{\cov \bigl(U_{2k}^q,V_{2k}^q
\bigr)+\cov \bigl({U'_{k}}^q,V_{2k}^q
\bigr) \bigr\}
\\
&&\qquad{} + 2^{2\tau_H(q)} k^{\zeta_H(2q)} \bigl\{\cov \bigl(U_{2k}^q,{V'_{k}}^q
\bigr)+\cov \bigl({U'_{k}}^q,{V'_{k}}^q
\bigr) \bigr\} .
\end{eqnarray*}
All the covariance terms are of the same order, and we only consider
the first
one, $\cov(U_k^q,V_k^q)$. Denote $l=1-2/k$, define the measure
$M_l(\mathrm{d} u) = \mathrm
e^{-w_l(u)}M(\mathrm{d} u)$ and
\begin{eqnarray*}
\zeta_{k,H} & = &\int_0^{1/k} \int
_0^{1/k} |u-v|^{2H-2} M_l(
\mathrm d u) M_l(\mathrm{d} v) ,
\\
\xi_{k,H} & =& \int_{1-1/k}^1 \int
_{1-1/k}^1 |u-v|^{2H-2} M_l(
\mathrm d u) M_l(\mathrm{d} v) ,
\\
A_k & =& A_l(1/k) \setminus A_l(1-1/k),\qquad
B_k = A_l(1-1/k) \setminus A_l(1/k) ,
\\
\bar A_k(u) & =& A_l(u) \setminus A_k,\qquad
\bar B_k(u) = A_l(u)\setminus B_k ,
\\
\pi_k(u,v) & =& P_0 \bigl(\bar A_k(u)
\bigr)+ P_0 \bigl(\bar A_k(v) \bigr) ,\\
\pi'_k(u,v) &= &P_0 \bigl(\bar
B_k(u) \bigr)+ P_0 \bigl(\bar B_k(v)
\bigr) ,
\\
\tilde\alpha_k & =& \zeta_{k,H}^{-1} \int
_0^{1/k} \int_0^{1/k}
|u-v|^{2H-2} \pi_k(u,v) M_l(\mathrm{d} u)
M_l(\mathrm{d} v) ,
\\
\tilde\beta_k & =& \xi_{k,H}^{-1} \int
_{1-1/k}^1 \int_{1-1/k}^1
|u-v|^{2H-2} \pi'_k(u,v) M_l(
\mathrm{d} u) M_l(\mathrm{d} v) .
\end{eqnarray*}
Then we can write
\begin{eqnarray*}
U_k^q & =& \mathrm{e}^{2qP(A_k)}
\zeta_{k,H}^q + \mathrm{e}^{2qP(A_k)}
\zeta_{k,H}^q \tilde\alpha_k +
\mathrm{e}^{2qP(A_k)} \zeta_{k,H}^q R_k ,
\\
V_k^q & = &\mathrm{e}^{2qP(B_k)}
\xi_{k,H}^q + \mathrm{e}^{2qP(B_k)}
\xi_{k,H}^q \tilde\beta_k +
\mathrm{e}^{2qP(B_k)} \xi_{k,H}^q R'_k
,
\end{eqnarray*}
with
\begin{eqnarray*}
R_k & =& \biggl( \int_0^{1/k}
\int_0^{1/k} |u-v|^{2H-2} \mathrm
{e}^{P(\bar A_k(u))+P(\bar A_k(v))} \tilde M_l(\mathrm{d} u) \tilde
M_l(\mathrm{d} v) \biggr)^q - 1 -q \tilde
\alpha_k ,
\\
R'_k & =& \biggl( \int_{1-1/k}^1
\int_{1-1/k}^1 |u-v|^{2H-2} \mathrm
e^{P(\bar B_k(u))+P(\bar B_k(v))} \tilde M'_l(\mathrm{d} u) \tilde
M'_l(\mathrm{d} v) \biggr)^q - 1 - q
\tilde\beta_k .
\end{eqnarray*}
Note that $P(\bar A_k(u))$, $P(\bar B_k(u))$, $\zeta_k$ and $\xi_k$ are mutually
independent and by~(\ref{eqapptrivial}),
\begin{eqnarray*}
|R_k| & \leq &C \sup_{u,v\in[0,1/k]} \bigl\llvert
\mathrm{e}^{P(\bar
A_k(u))+P(\bar
A_k(v))}-1 \bigr\rrvert^{q\vee2} + C \sup_{u,v\in[0,1/k]}
\bigl\llvert \mathrm {e}^{P(\bar A_k(u)) + P(\bar A_k(v))}-1 \bigr\rrvert^{2}
\\
&&{} + C \sup_{u,v\in[0,1/k]} \bigl\llvert \mathrm{e}^{P(\bar A_k(u))+P(\bar
A_k(v))}-1-
\pi_k(u,v) \bigr\rrvert .
\end{eqnarray*}
Applying now the bounds~(\ref{eqappmax-qgeq2}) and~(\ref{eqappideal-bound})
we obtain that
\begin{eqnarray*}
\esp \bigl[U_{k}^q \bigr] & = &\mathrm{e}^{2\psi(q)}
\esp \bigl[ \zeta_{k,H}^q \bigr] \bigl\{1+\mathrm{O}
\bigl(k^{-1} \bigr) \bigr\} + q \mathrm{e}^{2\psi(q)} \esp \bigl[
\zeta_{k,H}^q \tilde \alpha_k \bigr] ,
\\
\esp \bigl[V_{k}^q \bigr] & = &\mathrm{e}^{2\psi(q)}
\esp \bigl[ \xi_{k,H}^q \bigr] \bigl\{ 1+\mathrm{O}
\bigl(k^{-1} \bigr) \bigr\} + q \mathrm{e}^{2\psi(q)} \esp \bigl[
\xi_{k,H}^q \tilde\beta_k \bigr] ,
\\
\esp \bigl[U_{k}^q V_{k}^q
\bigr] & =& \mathrm{e}^{4\psi(q)} \esp \bigl[ \zeta_{k,H}^q
\bigr] \esp \bigl[ \xi_{k,H}^q \bigr] \bigl\{1+\mathrm{O}
\bigl(k^{-1} \bigr) \bigr\} + q \mathrm{e}^{2\psi(q)} \bigl\{ \esp
\bigl[ \xi_{k,H}^q \bigr] \esp \bigl[\zeta_{k,H}^q
\tilde\alpha_k \bigr]\\
&&{} + \esp \bigl[ \zeta_{k,h}^q
\bigr] \esp \bigl[\xi_{k,H}^q \tilde\beta_k
\bigr] \bigr\} .
\end{eqnarray*}
Combining these bounds yields the requested bound for $\cov(U_k^q,V_k^q)$
and~(\ref{eqappcov-diff-H>12}).
\end{pf}
\end{appendix}

%


\printhistory

\end{document}